\newcommand{\executeiffilenewer}[3]{%
\ifnum\pdfstrcmp{\pdffilemoddate{#1}}%
{\pdffilemoddate{#2}}>0%
{\immediate\write18{#3}}\fi%
}
\newcommand{%
\executeiffilenewer{.svg}{.pdf}%
{inkscape -z -D --file=.svg %
--export-pdf=.pdf --export-latex}%
\input{.pdf_tex}%
}[1]{%
\executeiffilenewer{#1.svg}{#1.pdf}%
{inkscape -z -D --file=#1.svg %
--export-pdf=#1.pdf --export-latex}%
\input{#1.pdf_tex}%
}
\newtheorem{theo}{Theorem}[section]
\newtheorem{propo}[theo]{Proposition}
\newtheorem{lema}[theo]{Lemma}
\newtheorem{defi}[theo]{Definition}
\newtheorem{example}[theo]{Example}
\newtheorem{problem}[theo]{Problem}
\newtheorem{coro}[theo]{Corollary}
\newtheorem{obs}[theo]{Observation}
\def\matrix0{{\mbox {\boldmath $O$}}}
\def\vec0{\mbox{\bf 0}}
\def\Z{\mathbb{Z}}
\def\mod{\mathop{\rm mod }\nolimits}
\def\t{{^{\!\top\!\!\!}}}
\def\1{{\bf 1}}
\definecolor{red}{rgb}{1,0,0}
\definecolor{blue}{rgb}{0,0,1}
\begin{document}
\begin{frontmatter}

\title{On the status sequences of trees}

\author[tue,ghent]{Aida Abiad}
\ead{a.abiad.monge@tue.nl}
\author[rice]{Boris Brimkov}
\ead{boris.brimkov@sru.edu}
\author[maastricht,novosibirsk]{Alexander Grigoriev}
\ead{a.grigoriev@maastrichtuniversity.nl}

\address[tue]{Department of Mathematics and Computer Science,\\ Eindhoven University of Tehcnology, Eindhoven, The Netherlands}

\address[ghent]{Department of Mathematics: Analysis, Logic and Discrete Mathematics,\\ Ghent University, Ghent, Belgium}


\address[rice]{Department of Mathematics and Statistics, Slippery Rock University, Slippery Rock, PA, USA}

\address[novosibirsk]{Novosibirsk State University, Novosibirsk, Russia}

\begin{abstract}
The status of a vertex $v$ in a connected graph is the sum of the distances from $v$ to all other vertices. The status sequence of a connected graph is the list of the statuses of all the vertices of the graph. In this paper we investigate the status sequences of trees. Particularly, we show that it is NP-complete to decide whether there exists a tree that has a given sequence of integers as its status sequence. We also present some new results about trees whose status sequences are comprised of a few distinct numbers or many distinct numbers. In this direction, we show that any status injective tree is unique among trees. Finally, we investigate how orbit partitions and equitable partitions relate to the status sequence.
\end{abstract}

\begin{keyword}
tree; status sequence; status injective; complexity; graph partition
\end{keyword}
\end{frontmatter}


\section{Introduction}
Sequences associated with a graph, such as the degree sequence, spectrum, and status sequence, contain useful information about the graph's structure and give a compact representation of the graph without using vertex adjacencies. Extracting and analyzing the information contained in such sequences is a crucial issue in many problems, such as graph isomorphism. In this paper, we study the status sequences of trees, and answer several questions about status realizability, uniqueness, and their relation to various graph partitions.

The  graphs  considered in  this  paper  are  finite,  simple, loopless and connected.  Let $G=(V,E)$ be a graph. The {\it status value} (also called {\it transmission index}) of a vertex $v$ in $G$, denoted $s(v)$, is the sum of the distances between $v$ and all other vertices, i.e., $s(v)=\sum_{u\in V}d(v,u)$, where $d(v,u)$ is the shortest path distance between $v$ and $u$ in $G$. This concept was introduced by Harary in 1959~\cite{H1959}. The {\it status sequence} of $G$, denoted $\sigma(G)$, is the list of the status values of all vertices arranged in nondecreasing order. Status sequences of graphs have recently attracted considerable attention, see, e.g., \cite{LTS2012,P1997,SL2011,S2014}.

A connected graph is {\it status injective} if the status values of its vertices are all distinct. We denote by $k(G)$ the {\it number of different status values} in $\sigma(G)$. A graph is {\it transmission-regular} if $k(G)=1$, i.e., if the status values of all its vertices are equal. Transmission-regular graphs have been studied by several authors (see, e.g., \cite{GRWC2016,AP2016,IG2008,I2009}). A sequence $\sigma'$ of integers is {\it status realizable} if there exists a graph $G$ with $\sigma(G)=\sigma'$. Let $\mathcal{F}$ be a family of connected graphs and $G$ be a graph in $\mathcal{F}$. A graph $G$ is {\it status unique in} $\mathcal{F}$ if for any $H\in \mathcal{F}$, $\sigma(H)=\sigma(G)$ implies that $H\simeq G$, i.e., $H$ and $G$ are isomorphic. For example, paths are status unique in the family of all connected graphs, since a path of order $n$ is the only graph of order $n$ containing a vertex of status value $n(n-1)/2$.

In general, compared to the adjacency list or the adjacency matrix of a graph, there is some loss of information in the status sequence. For example, one cannot obtain the distance degree sequence of a graph from its status sequence \cite{P1997}. It is also well known that non-isomorphic graphs may have the same status sequence. Nevertheless, the status sequence contains important information about the graph, and the study of graphs with special status sequences has produced interesting results. For instance, it is known that spiders are status unique in trees \cite{SL2011}. In (\cite{HB1990}, p.185) the authors proposed the problem of finding status injective graphs. This problem was addressed by Pachter \cite{P1997}, who proved that for any graph $G$ there exists a status injective graph $H$ that contains $G$ as an induced subgraph.

Buckley and Harary \cite{HB2002} posed the following problem in a paper that discusses various problems concerning distance concepts in graphs.

\begin{problem}[{\sc Status sequence recognition}]\label{problem11BH}
Characterize status sequences, i.e, find a characterization that determines whether a given sequence of positive integers is the status sequence of a graph.
\end{problem}

Motivated by the above facts, in this paper we consider the following  question raised by Shang and Lin in \cite{SL2011}:
\begin{problem}\label{con1}
Are status injective trees status unique in all connected graphs?
\end{problem}

The above problem  was recently answered in the negative in \cite{QZ2019}, where the authors provide a construction of pairs of a tree and a nontree graph with the same status sequence. In this paper, we provide new results on the direction of the above problem for extremal trees. In particular, we show that status injective trees are status unique in trees.

Moreover, we also investigate the following special case of Problem \ref{problem11BH}:
\begin{problem}[{\sc Tree status recognition}]\label{problemstatusrealisable1}
Given a sequence $\sigma'$ of positive integers, does there exist a tree $T$ such that $\sigma(T)=\sigma'$?
\end{problem}
Recognition problems similar to \ref{problem11BH} and \ref{problemstatusrealisable1} have been studied for some other graph sequences. For example, Erd\"os and Gallai \cite{EG1960} and Hakimi \cite{H1962} gave conditions to determine whether a given sequence is the degree sequence of some graph. Sequences related to distances in a graph have also been studied, see, e.g., \cite{HB2002}. In those papers, the authors not only tackle the recognition problems, but also construct fast algorithms for finding graphs that {\it realize} given sequences. Following the same direction, we address the following question regarding status sequences:
\begin{problem}[{\sc Status realizability in trees}]\label{problemstatusrealisable2}
Given a sequence of integers $\sigma'$, either construct a tree $T$ such that $\sigma(T)=\sigma'$ or report that such a tree does not exist.
\end{problem}

We are also interested in the following problem related to the status uniqueness of a graph:
\begin{problem}
What are necessary and/or sufficient conditions for a set of vertices of a graph to have the same status?
\end{problem}

This paper is organized as follows. In Section \ref{section:complexity}, we prove that {\sc Tree status recognition} is NP-complete. In Section \ref{section_realizability_trees}, we present several polynomially solvable special cases of {\sc Status realizability in trees}, specifically for symmetric and asymmetric trees. In Section \ref{section_status_partitions}, we explore how various well-known graph partitions relate to the status sequences.

\section{Complexity of {\sc Tree status recognition}}\label{section:complexity}
Let the {\it depth} of a tree $T=(V,E)$ be the smallest number $k$ such that there exists a vertex $v\in V$ such that $d(v,u)\leq k$ for all $u\in V$. This is equivalent to saying that the diameter of the tree is at most $2k$. In this section, we first study the complexity of {\sc Tree status recognition} when the trees are restricted to have the depth of $3$. We refer to the latter problem as {\sc SRT-D3}.
\begin{theo}\label{thm:complexity}
{\sc SRT-D3} is NP-complete.
\end{theo}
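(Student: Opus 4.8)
The plan is to show that {\sc SRT-D3} is in NP and then exhibit a polynomial-time reduction from a known NP-complete problem to it. Membership in NP is immediate: given a candidate tree $T$ on $n$ vertices (certificate of size polynomial in the input, since a status sequence of positive integers of length $n$ forces $T$ to have $n$ vertices), one can compute all pairwise distances and hence $\sigma(T)$ in polynomial time, and check both that it equals $\sigma'$ and that $T$ has depth $3$. The real work is the hardness direction.

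For the reduction I would start from a number-theoretic partition-type problem, most naturally {\sc 3-Partition} or plain {\sc Partition} / {\sc Subset Sum}, since the structure of a depth-$3$ tree is highly constrained and its status values decompose additively over the ``branches'' hanging off a central vertex (or central edge). A depth-$3$ tree is essentially a central vertex $r$ (or central edge) with subtrees of depth at most $2$ attached — each such subtree is a collection of stars whose centers are adjacent to $r$. The key observation to exploit is that the status value of $r$ and the status values of the star-centers and leaves are simple affine functions of the sizes of these attached stars and of $n$; so choosing how to distribute a given multiset of ``leaf counts'' among the star-centers becomes a combinatorial packing problem. The idea is to encode the target numbers of a {\sc 3-Partition} instance $\{a_1,\dots,a_{3m}\}$ with bound $B$ into the prescribed status sequence $\sigma'$ in such a way that a realizing depth-$3$ tree must consist of $m$ stars (the ``bins''), and the only freedom is which $a_i$'s go into which star; the status constraints then force each bin to receive a sub-multiset summing to exactly $B$.

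Concretely, the steps I would carry out are: (1) fix the global shape — argue that any tree realizing the constructed $\sigma'$ must have depth exactly $3$ and a unique center $r$, with a controlled number of depth-$2$ branches, by putting a single very large status value in $\sigma'$ that only a center of a tree of the right order can attain, and by forcing the leaf-level status values to pin down $n$ and the branch count; (2) write down the status value of each vertex type (center $r$, level-1 vertices = star centers, level-2 vertices = leaves of those stars, and possibly leaves directly attached to $r$) as an explicit linear expression in $n$ and in the sizes $s_1,\dots,s_k$ of the branches; (3) choose the gadget parameters (a big additive offset, and scaling of the $a_i$) so that matching the prescribed level-1 status values to the realized ones is equivalent to partitioning $\{a_1,\dots,a_{3m}\}$ into $m$ triples of equal sum $B$; (4) verify the reduction is polynomial and the equivalence holds in both directions.

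The main obstacle I anticipate is step (2)–(3): getting the bookkeeping of status values of a depth-$3$ tree to be genuinely \emph{linear and separable} in the branch sizes, and simultaneously rigid enough that no ``unintended'' tree realizes the sequence. The difficulty is that the status of the center depends on the whole size vector, while the status of a level-$2$ leaf depends both on the size of its own star and on the total $n$, so one must carefully isolate the quantities that the partition instance controls from the quantities that are fixed by $n$; this typically calls for introducing large dummy gadgets (e.g. a long pendant path or many fixed pendant leaves at $r$) to absorb slack and to make the admissible branch sizes lie in a narrow window, thereby ruling out spurious realizations. A secondary subtlety is the parity/diameter condition ``depth exactly $3$,'' which must be enforced so that the constructed instance is a legitimate {\sc SRT-D3} instance and not merely an {\sc SRT} instance; this is handled by ensuring at least two branches of depth $2$ are mandatory, so the diameter is forced to be exactly $6$.
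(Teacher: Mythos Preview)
Your overall approach matches the paper's: reduce from strongly NP-complete {\sc 3-Partition}, encode the $m$ bins and the items $a_1,\dots,a_{3m}$ in the branches of a depth-$3$ tree, and argue rigidity via exactly the two structural facts you gesture at---status values strictly increase along any path away from the (unique) median, and for an edge $(u,v)$ in a tree one has $s(u)-s(v)=|T_v|-|T_u|$, so consecutive status differences determine all subtree sizes.

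The genuine gap is in your gadget. Your list of vertex types (``center $r$, level-$1$ vertices $=$ star centers, level-$2$ vertices $=$ leaves of those stars'') has only three levels, i.e.\ depth~$2$, and leaves no place to encode the individual $a_i$. The paper's tree has \emph{four} levels: the root; $m$ level-$1$ ``bin'' vertices; $3m$ level-$2$ ``item'' vertices, one for each $a_i$; and $a_i$ leaves at level~$3$ below the $i$-th item vertex. That last layer is what makes the status of an item vertex depend on $a_i$ and turns ``which three items hang below each bin?''\ into the {\sc 3-Partition} constraint that every bin has exactly $B+3$ descendants. With only stars hanging directly off $r$ you cannot simultaneously encode items and bins, so the reduction as you sketched it would not go through without inserting this extra level. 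Two smaller corrections in the same spirit: a ``long pendant path'' dummy is incompatible with the depth-$3$ restriction, so rigidity has to be enforced arithmetically (the paper simply takes $A$ large relative to $B,m$) rather than by extra structure; and in the backward direction you must invoke the $B/4<a_i<B/2$ hypothesis from {\sc 3-Partition} to force each bin vertex to have exactly three item-children, not two or four summing to $B$.
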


\begin{proof}
SRT-D3 is clearly in NP, as the status sequence of a tree of depth 3 can be computed in linear time. We reduce the well-known strongly NP-complete problem {\sc 3-Partition} to SRT-D3. {\sc 3-Partition} (cf.~\cite{GareyJohnson79}) reads:\ Given a multiset $\mathcal{A}=\{a_1,a_2,\ldots,a_n\}$ of positive integers, does there exist a partition of $\mathcal{A}$ into triplets such that all triplets have the same sum?  Without loss of generality, assume $n$ is divisible by 3 and let $m=n/3$,\ $A=\sum_{i=1}^n a_i$ and $B=A/m$. Since {\sc 3-Partition} is strongly NP-complete, we may assume that the input of the problem is provided in unary encoding. Note that {\sc 3-Partition} remains strongly NP-complete even if $B/4<a_i<B/2$ for all $i\in \{1,2,\ldots,n\}$.

Given an instance $I$ of {\sc 3-Partition}, we construct an instance of {\sc SRT-D3} whose input sequence $\sigma'$ is comprised of the following integers:

\begin{itemize}
\item $3A+7m$ with multiplicity 1;
\item $4A-2B+11m-7$ with multiplicity $m$;
\item $5A-2B-2a_i+15m-8$ with multiplicity 1, for each $i\in\{1,2,\ldots,n\}$;
\item $6A-2B-2a_i+19m-9$ with multiplicity $a_i$, for each $i\in\{1,2,\ldots,n\}$.
\end{itemize}

Clearly, the size of the sequence $\sigma'$ is polynomial in the unary encoded size of $I$. We will show that $I$ is a yes-instance of {\sc 3-Partition} if and only if $\sigma'$ is a yes-instance of {\sc SRT-D3}. Suppose first that $I$ is a yes-instance of {\sc 3-Partition}, i.e., there is a partition of $\mathcal{A}$ into $m$ triplets such that the sum of integers in each triplet equals $B$. Consider the following tree $T$. Take a root vertex with exactly $m$ child-nodes which represent $m$ triplets.  For each child-node representing a triplet, create exactly three descendants representing the elements of the triplet. Finally, for each vertex representing an element $a_i$ of a triplet, create exactly $a_i$ descendants, which are the leaves of the tree. By construction the tree is of depth 3. See Figure \ref{fig:tree3partition} for an illustration.
\begin{center}
\begin{figure}[ht]
\centering
  \includegraphics[scale=0.3]{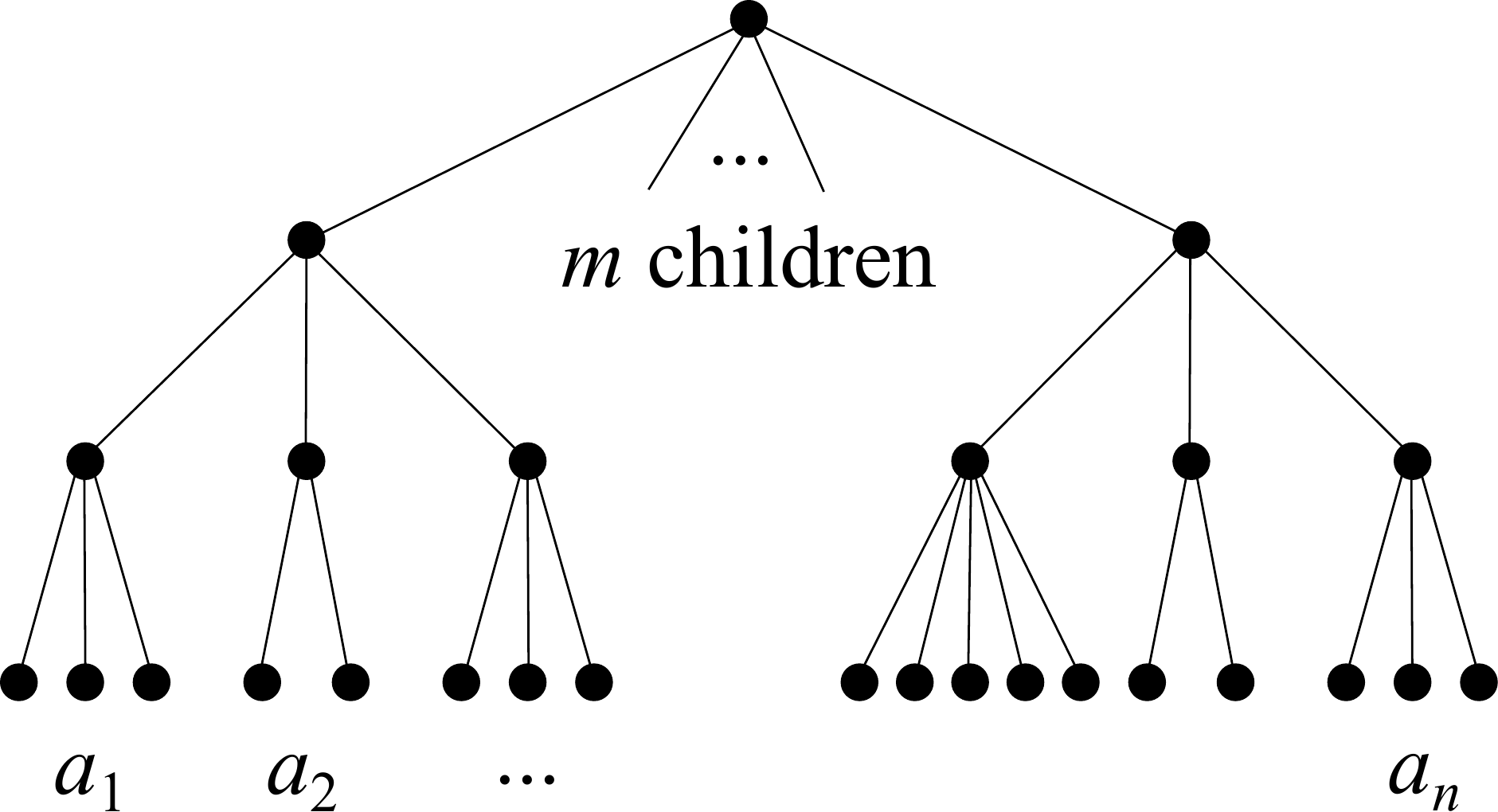}
    \caption{Gadget of the reduction from {\sc 3-Partition}}
  \label{fig:tree3partition}
\end{figure}
\end{center}

The status of the root equals $1\cdot m+2\cdot 3m+3\cdot \sum_i a_i=3A+7m$. The status of any vertex representing a triplet equals $1\cdot 4+2\cdot (B+m-1) +3\cdot 3(m-1)+ 4\cdot (m-1)B=4A-2B+11m-7$. There are $m$ such vertices. The status of a vertex representing an element $a_i$ equals $1\cdot (a_i+1)+2\cdot 3+3\cdot (B-a_i+m-1)+4\cdot 3(m-1)+5\cdot (m-1)B=5A-2B-2a_i+15m-8$. Finally, the status of a leaf adjacent to a vertex representing $a_i$ equals $1\cdot 1+ 2\cdot a_i+3\cdot 3+4\cdot (B-a_i+m-1)+ 5\cdot 3(m-1)+ 6\cdot (m-1)B=6A-2B-2a_i+19m-9$. Thus, tree $T$ of depth 3 realizes $\sigma'$.

Before tackling the opposite direction of the proof, we first recall some useful properties of the status values of the vertices in a tree. The {\it median} of a connected graph $G$ is the set of vertices of $G$ with the smallest status.

\begin{lema}[\cite{EJS1976}]\label{EJS1976}
If $v_1$ is a vertex in the median of a tree $T$, $v_1,v_2,\ldots ,v_r$ is a path in $T$, and $v_2$ is not a median of $T$, then $s(v_1)<s(v_2)<\cdots < s(v_r)$.
\end{lema}

\begin{lema}[\cite{SL2011}] \label{lema2.2SL2011}
Suppose that $v_1$ and $v_2$ are adjacent vertices in a tree $T=(V,E)$. Let $T_1$ and $T_2$ be two components of $T$ after deletion of the edge $(v_1,v_2)$, and let $v_1\in V(T_1)$ and $v_2\in V(T_2)$. Then, $s(v_1)-s(v_2)=|V(T_2)|-|V(T_1)|$.
\end{lema}

Now, suppose, $\sigma'$ is a yes-instance of SRT-D3 and a tree $T$ is its realization.  Without loss of generality, we may assume $A>3B+19m+9$, for otherwise we add to every element of the multiset an additive constant $3B+19m+9$ preserving the hardness of the {\sc 3-partition} case and repeat the arguments. For these sufficiently large values $A$, the vertex with status $3A+7m$ is the only median of $T$. Moreover, by Lemma~\ref{EJS1976} for all $i\in\{1,2,\ldots,n\}$, the vertices of status $6A-2B-2a_i+19m-9$ are the leaves of $T$. Furthermore, by Lemma~\ref{lema2.2SL2011} for all $i\in\{1,2,\ldots,n\}$, any vertex of status $6A-2B-2a_i+19m-9$ is adjacent to a vertex of status $5A-2B-2a_i+15m-8$ as the difference in status between a leaf and its ancestor equals the size of the tree minus 2, which in $T$ equals $A+4m-1$ . Next, we notice that for any $i\in\{1,2,\ldots,n\}$, the vertex of status $5A-2B-2a_i+15m-8$ cannot be directly adjacent to the median with status $3A+7m$ as the difference in statuses is too large contradicting Lemma~\ref{lema2.2SL2011}. This implies that for any $i\in\{1,2,\ldots,n\}$, the ancestor of a vertex of status $5A-2B-2a_i+15m-8$ can only be a vertex of status $4A-2B+11m-7$ and all vertices of the latter status are adjacent to the median. Since all vertices adjacent to the median have the same status $4A-2B+11m-7$, each of these vertices has exact the same total number of descendants, namely $B+3$.

Consider an edge between a vertex $v$ of status $5A-2B-2a_i+15m-8,\ i\in\{1,2,\ldots,n\}$, and its ancestor $u$ of status $4A-2B+11m-7$. By Lemma~\ref{lema2.2SL2011}, the number of leaves $x$ adjacent to $v$ equals the size of the tree after deletion of $v$ together with all its $x$ descendants minus the status difference between $v$ and $u$ and minus one for counting $v$ itself. The size of the tree after deletion of $v$ with all its descendants is $A+4m+1-x-1=A+4m-x$. The difference in status of $v$ and $u$ is $5A-2B-2a_i+15m-8-(4A-2B+11m-7)=A-2a_i+4m-1$. Therefore, $x=A+4m-x-(A-2a_i+4m-1)-1=2a_i-x$, yielding $x=a_i$.

Summarizing the findings, with necessity the following properties hold for $T$:
\begin{enumerate}
\item The median of $T$ is the vertex of status $3A+7m$;
\item There are $m$ vertices of status $4A-2B+11m-7$ adjacent to the median;
\item There are exactly $B+3$ descendants for every vertex of status $4A-2B+11m-7$;
\item Only vertices of status $5A-2B-2a_i+15m-8,\ i\in\{1,2,\ldots,n\}$, can be immediate successors/descendants of a vertex of status $4A-2B+11m-7$;
\item For every vertex $v_i$ of status $5A-2B-2a_i+15m-8,\ i\in\{1,2,\ldots,n\}$, there are exactly $a_i$ leaves of status $6A-2B-2a_i+19m-9$ adjacent to $v_i$.
\end{enumerate}
Finally, consider a vertex $u$ of status $4A-2B+11m-7$. As stated above, there are exactly $B+3$ descendants of $u$. By assumption (in {\sc 3-Partition}) that  $B/4<a_i<B/2$ for all $i\in \{1,2,\ldots,n\}$, vertex $u$ has exactly three immediate successors/descendants, say $v_i,\ v_j,\ v_k$ where $i,j,k\in \{1,2,\ldots,n\}$, and these vertices have exactly $a_i,a_j,a_k$ descendant leaves, respectively. Thus, $a_i+a_j+a_k=B$, which completes the proof.
\end{proof}

Theorem~\ref{thm:complexity} straightforwardly implies the following corollary.
\begin{coro}
{\sc Tree status recognition} is NP-complete.
\end{coro}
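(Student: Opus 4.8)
The plan is to verify both halves of NP-completeness, recycling the machinery of Theorem~\ref{thm:complexity}. For membership in NP, the natural certificate for a purported yes-instance $\sigma'$ is a tree $T$ with $\sigma(T)=\sigma'$: one computes all pairwise distances of $T$ (for instance by a BFS from each vertex), forms the status values, sorts them, and checks equality with $\sigma'$, all in time polynomial in $|\sigma'|$. Observe that any realizing tree, if one exists, has exactly $|\sigma'|$ vertices, so the size of the certificate is controlled by the input.

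For NP-hardness, the key observation is that the reduction from {\sc 3-Partition} constructed in the proof of Theorem~\ref{thm:complexity} already establishes hardness of {\sc Tree status recognition}, not merely of its depth-$3$ restriction. Indeed, in the ``only if'' direction of that proof the depth-$3$ property of the realizing tree $T$ is never assumed: starting only from the hypothesis that \emph{some} tree $T$ realizes the constructed sequence $\sigma'$, Lemmas~\ref{EJS1976} and~\ref{lema2.2SL2011} are used to force, successively, the location of the median, the adjacencies among the four status classes, the per-vertex descendant counts, and ultimately a partition of $\mathcal{A}$ into triplets of equal sum $B$. Hence the same polynomial-time map $I\mapsto\sigma'$ has the property that $I$ is a yes-instance of {\sc 3-Partition} if and only if there exists a tree, of any depth, realizing $\sigma'$, that is, if and only if $\sigma'$ is a yes-instance of {\sc Tree status recognition}. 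Together with membership in NP, this yields the corollary.

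The only point requiring care — and the reason the implication is not completely automatic — is that {\sc SRT-D3} is not a syntactic restriction of {\sc Tree status recognition}: the two problems have the same instances, and the depth-$3$ condition constrains the realizing object rather than the input. So one cannot simply invoke ``hardness of a special case''. What rescues the argument is precisely the structural analysis in the proof of Theorem~\ref{thm:complexity}: on the image of the reduction, every tree that realizes $\sigma'$ is automatically of depth $3$, so on these instances the answers to {\sc SRT-D3} and {\sc Tree status recognition} coincide, and the reduction transfers verbatim. I expect this book-keeping about what the earlier proof actually quantifies over to be the main (minor) obstacle; the rest is routine.
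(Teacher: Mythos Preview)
Your proof is correct and follows the same line as the paper's: both observe that the backward direction of the reduction in Theorem~\ref{thm:complexity} never uses the depth-$3$ hypothesis, so any tree realizing the constructed sequence is forced to have depth~$3$, whence the reduction already certifies NP-hardness of unrestricted {\sc Tree status recognition}. Your write-up is more explicit than the paper's one-sentence proof in spelling out NP membership and the logical point that hardness of {\sc SRT-D3} does not transfer formally (since the depth constraint is on the witness, not the instance) but rather via the coincidence of the two problems on the image of the reduction; this extra care is welcome but does not constitute a different approach.
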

\begin{proof}
In Theorem~\ref{thm:complexity}, there is no loss of generality: when realizing a tree from the constructed sequence of integers, we derived properties 1--5 implying that the only trees possibly realizing the sequence are trees of depth 3.
\end{proof}

Notably, Theorem~\ref{thm:complexity} has also strong algorithmic implications for the following optimization problem.
\begin{problem}[{\sc Minimum Status Correction}] 
Given a sequence of integers $\sigma'$, what is the minimum change on $\sigma'$ (under any norm that measures distance between sequences) that makes $\sigma'$ status realizable in trees? 
\end{problem}
\begin{coro}
Unless $P=NP$, there is no polynomial time constant approximation algorithm for {\sc Minimum Status Correction}.
\end{coro}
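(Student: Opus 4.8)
The plan is to exploit a standard ``gap at zero'' phenomenon: a multiplicative approximation algorithm for a minimization problem is forced to be \emph{exact} on every instance whose optimum is $0$, and for {\sc Minimum Status Correction} the optimum on a sequence $\sigma'$ is $0$ precisely when $\sigma'$ is already the status sequence of a tree.

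First I would pin down the objective. For any reasonable norm measuring the distance between two sequences, this distance is $0$ exactly when the sequences coincide; and since all sequences under consideration are integer sequences (of a fixed length $n$, for which there are only finitely many trees), the minimum is attained and any nonzero distance between integer sequences is at least $1$. Writing $\mathrm{OPT}(\sigma')$ for the optimal value of {\sc Minimum Status Correction} on $\sigma'$, we therefore have $\mathrm{OPT}(\sigma') = 0$ if $\sigma'$ is status realizable in trees, and $\mathrm{OPT}(\sigma') \ge 1$ otherwise.

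Next, assume for contradiction that for some constant $c \ge 1$ there is a polynomial-time algorithm $\mathcal{A}$ that, on input $\sigma'$, returns a tree-realizable sequence within distance $c \cdot \mathrm{OPT}(\sigma')$ of $\sigma'$ (so, in particular, a value $d$ with $\mathrm{OPT}(\sigma') \le d \le c\cdot\mathrm{OPT}(\sigma')$). Given an arbitrary instance $\sigma'$ of {\sc Tree status recognition}, run $\mathcal{A}$ and compute $d$. If $\sigma'$ is realizable then $d \le c \cdot 0 = 0$, hence $d = 0$; if $\sigma'$ is not realizable then $d \ge \mathrm{OPT}(\sigma') \ge 1 > 0$. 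Thus testing whether $d = 0$ decides {\sc Tree status recognition} in polynomial time, contradicting its NP-completeness (Theorem~\ref{thm:complexity} and the following corollary) unless $P = NP$.

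The one point that genuinely needs care — and the step I would emphasize — is the gap argument itself: one must verify that the approximation guarantee is truly multiplicative (so that $c \cdot 0 = 0$ kills any slack) and that ``no modification'' is the optimum exactly for realizable sequences. Both are immediate for any sensible norm on integer sequences, and no new gadget or reduction is required beyond the hardness already proved in Theorem~\ref{thm:complexity}.
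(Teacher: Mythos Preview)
Your argument is correct and is exactly the ``straightforward gap reduction'' the paper invokes: an approximation algorithm with any constant ratio would have to return $0$ on realizable inputs and a strictly positive value otherwise, thereby deciding {\sc Tree status recognition} in polynomial time. The paper gives no further detail, so your write-up simply fills in what the authors left implicit.
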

\begin{proof}
The proof follows from Theorem~\ref{thm:complexity} by a straightforward gap reduction.
\end{proof}

\section{Polynomial special cases of {\sc Status realizability in trees}}\label{section_realizability_trees}
In this section, we investigate some special cases where {\sc status realizability in trees} can be solved in polynomial time. We also address an extremal case of Problem~\ref{con1}, showing that status injective trees are status unique in trees.

We begin with a result about asymmetric trees. The following result shows that given an integer sequence with $n$ distinct values, it can be determined in polynomial time whether or not this sequence is the status sequence of a tree $T=(V,E)$ of order $n$. Note that such a tree, if realized, is highly asymmetric, as it cannot have any nontrivial automorphisms; see Corollary \ref{obs:treestatusinjectiveautomorphisms} for details.

\begin{theo}\label{answerconj1}
Status injective trees are status unique in trees.
\end{theo}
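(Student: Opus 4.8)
The plan is to show that if $T$ is a status injective tree and $T'$ is any tree with $\sigma(T')=\sigma(T)$, then $T\simeq T'$. The key structural tool is Lemma \ref{lema2.2SL2011}: if $uv$ is an edge of a tree and we delete it, splitting the tree into $T_u\ni u$ and $T_v\ni v$, then $s(u)-s(v)=|V(T_v)|-|V(T_u)|$. Since $|V(T_u)|+|V(T_v)|=n$, knowing $s(u)-s(v)$ determines the sizes of the two sides of every edge. I would proceed by rooting both trees at their medians. Since $T$ is status injective it has a unique median vertex $r$ (a tree has either one median vertex or two adjacent ones, and two adjacent medians would have equal status by Lemma \ref{lema2.2SL2011} applied with a balanced edge — impossible under injectivity); the same argument forces $T'$ to have a unique median $r'$, and since the multiset of statuses agrees, $s_T(r)=s_{T'}(r')=\min\sigma(T)$.

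First I would establish that $T'$ is also status injective: its status multiset equals $\sigma(T)$, which has $n$ distinct values, so all $n$ vertices of $T'$ get distinct statuses. Hence there is a canonical bijection $\phi\colon V(T)\to V(T')$ matching vertices of equal status, and in particular $\phi(r)=r'$. The heart of the argument is to show $\phi$ is a graph isomorphism, and I would do this by induction on distance from the median, reconstructing the rooted tree level by level. By Lemma \ref{EJS1976}, moving away from the median along any path strictly increases status, so in the rooted tree $T_r$ the parent of any non-root vertex $v$ has strictly smaller status than $v$; thus the parent of $v$ is forced to be the unique neighbor of $v$ with smaller status, and the "branch sizes'' below each vertex are pinned down by Lemma \ref{lema2.2SL2011}. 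The claim is that the status of a vertex $v$ together with the statuses already placed above it in the rooted tree determines: (i) which vertex is its parent, and (ii) for the subtree $T_v$ hanging below $v$, the multiset of statuses of vertices in $T_v$. Point (ii) follows because $|V(T_v)|$ is determined by the edge from $v$ to its parent via Lemma \ref{lema2.2SL2011}, and — crucially — one can show that the statuses of vertices inside $T_v$ are exactly those status values that are "consistent'' with lying below $v$; concretely, if $w$ is a descendant of $v$ then the $v$–$w$ path lies in $T_v$ and the size relations along it, combined with the known value $|V(T_v)|$, identify $T_v$'s vertex set as a sub-multiset of $\sigma(T)$ in a forced way.

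The main obstacle I anticipate is exactly the bookkeeping in point (ii): showing that the partition of the $n$ statuses into the subtrees hanging off the children of a given vertex $u$ is \emph{forced}, rather than merely constrained. The danger is an "exchange'' scenario where two different ways of distributing leaves/subtrees among the children of $u$ produce the same status multiset; injectivity of $T$ is what should rule this out, but turning that into a clean argument is the delicate part. I would handle it by processing children of $u$ in increasing order of their (distinct) statuses: the child with smallest status corresponds to the largest hanging subtree by Lemma \ref{lema2.2SL2011}, its subtree's vertex set is the unique sub-multiset of the remaining statuses of the correct size that is itself "median-rooted-consistent,'' and one peels it off and recurses. Once every rooted subtree of $T'$ is shown to be determined by the same data that determines the corresponding rooted subtree of $T$, the bijection $\phi$ respects the parent relation, hence is an isomorphism $T\simeq T'$, completing the proof. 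I would also remark that this yields Corollary \ref{obs:treestatusinjectiveautomorphisms}: a status injective tree has no nontrivial automorphism, since any automorphism must fix every vertex (it preserves status, and statuses are distinct).
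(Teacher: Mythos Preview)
Your strategy is built on the right lemmas (Lemmas \ref{EJS1976} and \ref{lema2.2SL2011}) and the right observation that injectivity forces a unique median and a canonical status-matching bijection $\phi$. However, the step you yourself flag as delicate --- ``point (ii)'', showing that the partition of the remaining status values into the subtrees hanging off the children of a vertex $u$ is \emph{forced} --- is not actually proved in your write-up. Asserting that the subtree of the smallest-status child is ``the unique sub-multiset of the remaining statuses of the correct size that is itself median-rooted-consistent'' is exactly the statement to be justified, and you have not said what ``consistent'' means operationally or why two different consistent sub-multisets of the same size cannot coexist. Without this, the top-down induction does not close.

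The paper sidesteps this difficulty entirely by reversing the direction of reconstruction: it processes vertices in \emph{decreasing} order of status (leaves first, median last). When $v_i$ is reached, all of its descendants have larger status and have therefore already been processed, so the count $c_i$ of descendants of $v_i$ is known exactly. Lemma \ref{lema2.2SL2011} then gives the status of the parent as $a_i-n+2(c_i+1)$, and injectivity pins down the parent uniquely. No partitioning of status values into sibling subtrees is ever needed; each edge is determined by a single arithmetic identity. If you rewrite your induction bottom-up in this way, your ``point (ii)'' obstacle dissolves and the argument becomes a short, clean algorithm with an obvious uniqueness proof.
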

\begin{proof} The proof is constructive, i.e., we present an algorithm whose input is a sequence $\sigma'$ of $n$ distinct positive integers and the output is either a unique tree $T$ realizing $\sigma'$ or a conclusion that the sequence is not a status sequence of any tree.
\LinesNumbered
\normalem
\begin{algorithm2e}[h]
\label{algorithm_trees}
\textbf{Input:} A sequence of integers $\sigma'=\{a_1,a_2,\ldots,a_n\}$, with $a_1> a_2> \ldots > a_n$.\\
\textbf{Output:} A tree $T$ whose status sequence is $\sigma'$, or an affirmation that such a tree does not exist.\\
\For{$i=1,\ldots, n$}{
Create a vertex $v_i$;\\
$p_i\leftarrow \emptyset$;\hfill $\rhd\ p_i$ is the parent of $v_i$ in $T$\\
$c_i\leftarrow 0$;\hfill $\rhd\ c_i$ is the number of descendants of $v_i$ in $T$\\
}
\For{$i=1,\ldots,n-1$}{
Find $j\in \{i+1,\ldots,n\}$ such that $a_j=a_i-n+2(c_i+1)$\;
\textbf{if} such index $j$ does not exist \textbf{then return} ``$A$ is not the status sequence of a tree" \textbf{else}\\
$p_i\leftarrow v_j$\;
$c_j\leftarrow c_j+c_i+1$\;
}
\Return{$T\leftarrow (\{1,\ldots,n\},\{(v_i,p_i):\ i=1,\ldots,n-1\})$}
\caption{Status injective tree}
\end{algorithm2e}
\ULforem

Algorithm \ref{algorithm_trees} attempts to construct a tree whose vertices $v_1,\ldots,v_n$ correspond to the status values in the input sequence $\sigma'$ (which by assumption are all distinct); the tree is specified by assigning a parent $p_i$ to each vertex $v_i$, except the vertex $v_n$ which is treated as the root. The algorithm also stores and updates the number of descendants of vertex $v_i$ (excluding $v_i$ itself) as the variable $c_i$. In the main loop of the algorithm, the vertices are considered according to decreasing status values.

We will show by induction that after iteration $i$ of the algorithm, the parents of the vertices $v_1,\ldots,v_i$ are uniquely identified as $p_1,\ldots,p_i$, and that the number of descendants of $v_{i+1}$ among $\{v_1,\ldots,v_i\}$ is $c_{i+1}$.

In the first iteration of the algorithm, by Lemma \ref{EJS1976}, vertex $v_1$ (corresponding to status value $a_1$) must be a leaf. By Lemma \ref{lema2.2SL2011}, $a_1-a_{p_1}=(n-(c_1+1))-(c_1+1)$, so $a_{p_1}=a_1-n+2(c_1+1)$. If this value is not in $\sigma'$, then $\sigma'$ is not the status sequence of a tree. Otherwise, the vertex $v_j$ with status value $a_1-n+2(c_1+1)$ is the parent of $v_1$ and vertex $v_1$ becomes a descendant of vertex $p_1$. Thus, after the first iteration, the parent of $v_1$ is uniquely identified as $p_1$, and the number of descendants of $v_2$ among $\{v_1\}$ is $c_2$ (because it is either 0, due the way the variable is initialized (in line 5), or it is 1, i.e., $v_1$, if $p_1=v_2$).

Suppose the statement is true for iterations $1,\ldots,i-1$, and consider iteration $i$. By Lemma \ref{lema2.2SL2011} and by the assumption that the number of descendants of $v_i$ among $\{v_1,\ldots,v_{i-1}\}$ is $c_i$, it follows that $a_i-a_{p_i}=(n-(c_i+1))-(c_i+1)$, so $a_{p_i}=a_i-n+2(c_i+1)$. If this value is not in $\sigma'$, then $\sigma'$ is not the status sequence of a tree (line 8). Otherwise, the vertex $v_j$ corresponding to status value $a_i-n+2(c_i+1)$ is the parent $p_i$ of $v_i$ (line 9) and the descendants of $v_i$, including $v_i$, become descendants of $p_i$ (line 10). See Figure \ref{fig:algo1step} for an illustration. Moreover, since the input of the equation used to compute $p_i$ (in line 7) is uniquely determined by assumption, and the equation is deterministic, it follows that $p_i$ is uniquely identified. By induction, it follows that Algorithm \ref{algorithm_trees} uniquely constructs a tree from a sequence of distinct integers, if such a tree can be constructed. Thus, status injective trees are status unique among trees.
\end{proof}


\begin{figure}[ht]
\centering
  \includegraphics[scale=0.5]{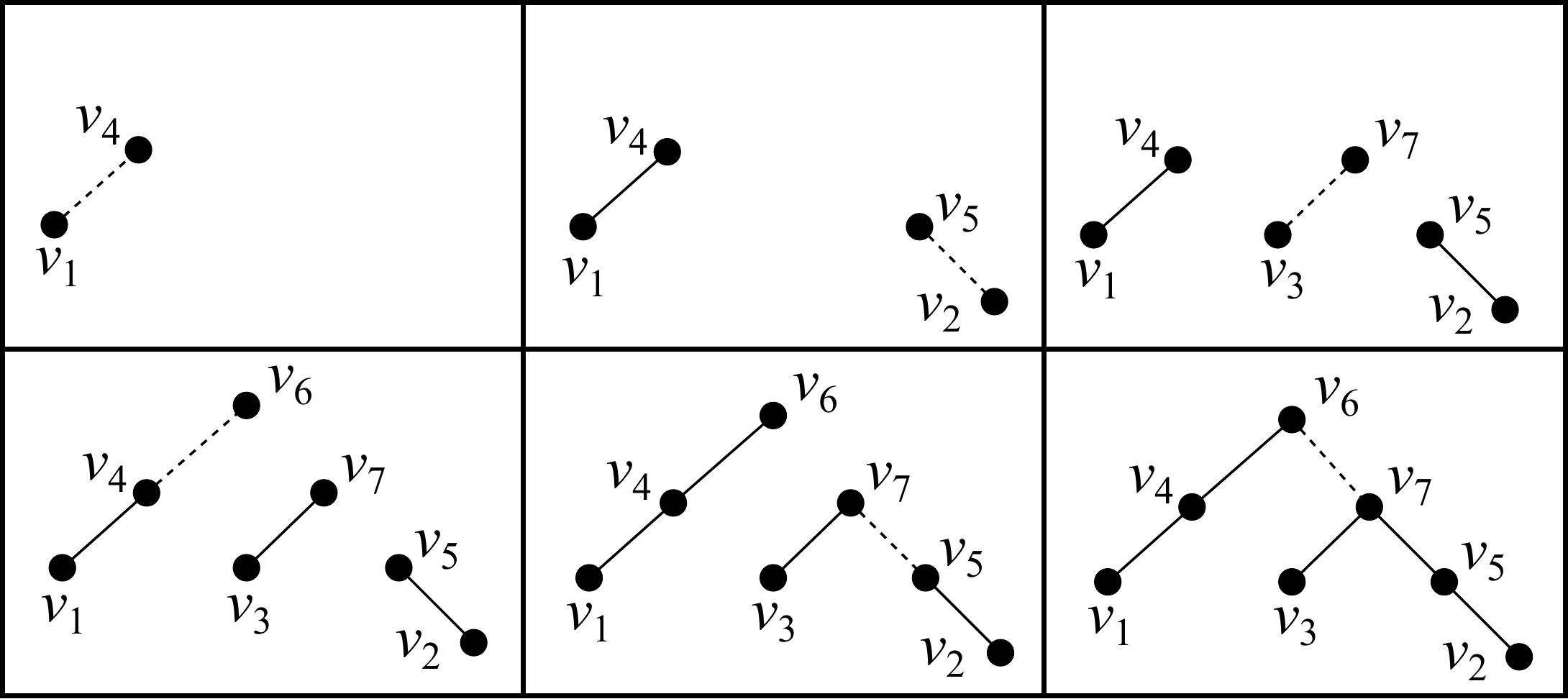}
\caption{Illustration of Algorithm \ref{algorithm_trees} for input $\sigma'=\{19,18,15,14,13,11,10\}$.}
\label{fig:algo1step}
\end{figure}

Note that Algorithm \ref{algorithm_trees} terminates in  $O(n\log n)$ time and can therefore be used constructively and very efficiently.

While Algorithm~\ref{algorithm_trees} is designed to tackle the asymmetric (status injective) cases of {\sc Status realizability in trees}, the remainder of this section addresses polynomially solvable symmetric cases of {\sc SRT-D3}, when the number of distinct status values is bounded by a constant and the degree of the center vertex is bounded by a constant. Note that by Theorem~\ref{thm:complexity}, the problem {\sc SRT-D3} is NP-complete in general, but the reduction requires a high number of distinct status values and a high degree center vertex of the tree. Recall that $k(G)$ denotes the number of distinct status values of a graph $G$. The following result provides a tight lower bound on $k(T)$ for a tree $T$, which will be used in the sequel.
\begin{lema}\label{boundk}
Let $T$ be a tree on $n$ vertices. Then,
\begin{equation*}
k(T)\geq \left\lceil\frac{\text{diam}(T)+1}{2}\right\rceil,
\end{equation*}
and this bound is tight.
\end{lema}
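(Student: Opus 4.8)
The plan is to prove the lower bound by exhibiting, along a diametral path, a strictly increasing chain of status values whose length is at least $\lceil(\mathrm{diam}(T)+1)/2\rceil$, and then to exhibit a family of trees attaining equality. First I would fix a median vertex $v$ of $T$ (for a tree the median is either a single vertex or an edge) and let $P = u_0 u_1 \cdots u_d$ be a diametral path, so $d = \mathrm{diam}(T)$. A median of a tree is a ``centroid'', so it is easy to see that a median vertex lies on $P$ (or is one of the two central vertices of $P$); more precisely, the distance from $v$ to the two ends $u_0, u_d$ differs by at most $1$, so $\max\{d(v,u_0), d(v,u_d)\} \ge \lceil d/2 \rceil$. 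Say this maximum is achieved by $u_d$, and consider the path in $T$ from $v$ to $u_d$, which has length at least $\lceil d/2\rceil$, hence passes through at least $\lceil d/2\rceil + 1$ vertices $v = w_0, w_1, \ldots, w_r$ with $r \ge \lceil d/2 \rceil$. By Lemma~\ref{EJS1976} applied with $v_1 = v$ (noting $w_1$ is not a median, since a tree has at most two adjacent medians and we may orient toward $u_d$ so as to leave any second median behind), the status values $s(w_0) < s(w_1) < \cdots < s(w_r)$ are strictly increasing, giving $r+1 \ge \lceil d/2\rceil + 1$ distinct status values. Since $\lceil d/2 \rceil + 1 = \lceil (d+2)/2 \rceil = \lceil (d+1)/2 \rceil$ when $d$ is odd, and $= d/2 + 1 = \lceil(d+1)/2\rceil$ when $d$ is even as well (in the even case $\lceil(d+1)/2\rceil = (d+2)/2 = d/2+1$), in all cases $k(T) \ge \lceil (\mathrm{diam}(T)+1)/2 \rceil$.

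The one subtlety to handle carefully is the case where the median is an edge $v v'$ rather than a vertex, and the orientation of the diametral path relative to the median; the cleanest fix is to choose the endpoint of the diametral path that is farther from (the chosen median vertex) and to invoke Lemma~\ref{EJS1976} along the $v$-to-endpoint path after checking its first edge leaves the median set. This is the step I expect to require the most care, since one must be sure the path from the median to the far endpoint of the diametral path genuinely has length $\ge \lceil d/2 \rceil$ and that Lemma~\ref{EJS1976}'s hypothesis (second vertex not a median) is met; a short argument using the fact that a median vertex of a tree minimizes status and that moving away from it strictly increases status handles both points.

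For tightness, I would exhibit paths: the path $P_n$ on $n$ vertices has $\mathrm{diam}(P_n) = n-1$ and exactly $\lceil n/2 \rceil$ distinct status values (the status values of the two endpoints coincide, those of the next pair coincide, and so on, so $k(P_n) = \lceil n/2 \rceil = \lceil (\mathrm{diam}(P_n)+1)/2 \rceil$). Thus the bound is attained for every value of the diameter, which establishes tightness and completes the proof.
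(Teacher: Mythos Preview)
Your strategy—fix a median vertex and apply Lemma~\ref{EJS1976} along a path to a far endpoint—is the same as the paper's. The gap is in the two-median case, which you flag as a subtlety but do not correctly resolve. For odd $d$ you write $\lceil d/2\rceil+1=\lceil(d+1)/2\rceil$; this is false (the left side equals $(d+3)/2$, the right $(d+1)/2$), and indeed the balanced double star has $d=3$ but $k=2$, contradicting your claimed count of $\lceil d/2\rceil+1=3$ distinct status values. Your proposed fix—walk toward the farther endpoint so that the first step ``leaves the second median behind''—cannot work in that example: from either median vertex, the farther leaf is reached only through the other median. The paper handles this by splitting into cases. When the median is a single vertex, the $r+1$ vertices on the walk all have distinct statuses, giving $r+1\ge\lceil d/2\rceil+1\ge\lceil(d+1)/2\rceil$; when the median is an edge $\{v,v'\}$, one accepts that the walk begins $v\to v'$, notes $s(v)=s(v')$, and counts only the remaining $r$ strictly increasing values $s(v')<s(w_2)<\cdots<s(w_r)$, which still meets the required bound.

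A smaller correction: a median (centroid) vertex of a tree need not lie on a given diametral path, and its distances to the two ends need not differ by at most~$1$---those are properties of the \emph{center}, which is a different object. (Hang many pendants off one end of a long path and the centroid is pulled to that end, far from the midpoint of the diametral path.) The inequality $\max\{d(v,u_0),d(v,u_d)\}\ge\lceil d/2\rceil$ that you actually need is nonetheless true for \emph{every} vertex $v$, since if $p$ is the point of $P$ nearest $v$ then $d(v,u_0)+d(v,u_d)=2d(v,p)+d\ge d$; so the step survives, but with a different justification. Your tightness argument via $P_n$ is correct and matches one of the paper's examples (the paper cites paths and stars).
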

\begin{proof}
Let $\text{diam}(T)$ be the diameter of $T$. It is known that the median of $T$ consists of either a single vertex or two adjacent vertices. Moreover, if the median of $T$ consists of a single vertex, then $\text{rad}(T)=\frac{\text{diam}(T)}{2}$; if the median consists of two vertices, then $\text{rad}(T)=\frac{\text{diam}(T)+1}{2}$. Suppose the median of $T$ consists of a single vertex $v$, and let $w$ be a vertex at maximum distance from $v$. Then, $d(v,w)=\text{rad}(T)=\frac{\text{diam}(T)}{2}$, and by Lemma \ref{EJS1976}, each of the $d(v,w)+1$ vertices on the path between $v$ and $w$ (including $v$ and $w$) has a different status. Thus, $k(T)\geq d(v,w)+1=\frac{\text{diam}(T)}{2}+1\geq \left\lceil\frac{\text{diam}(T)+1}{2}\right\rceil$. Now suppose the median of $T$ consists of two vertices $v$ and $v'$, and let $w$ be a vertex at maximum distance from $v$. Since $v$ and $v'$ are adjacent, the path between $v$ and $w$ passes through $v'$ (otherwise the path from $v'$ to $w$ would be longer). Then, $d(v,w)=\text{rad}(T)=\frac{\text{diam}(T)+1}{2}$, and by Lemma \ref{EJS1976}, each of the $d(v,w)$ vertices on the path between $v$ and $w$, excluding $v$ and including $w$, has a different status. Thus, $k(T)\geq d(v,w)=\frac{\text{diam}(T)+1}{2}$, and since $k(T)$ is an integer, it follows that $k(T)\geq \left\lceil\frac{\text{diam}(T)+1}{2}\right\rceil$.
The bound is tight, e.g., for stars and paths.
\end{proof}

Now, we are ready to address polynomial solvability of {\sc SRT-D3} with a fixed number of distinct status values; these types of trees are highly symmetric.

\begin{theo}
\label{prop3.6}
Let $\sigma'$ be a multiset of $n$ integers and $k$ be the number of distinct values in $\sigma'$. Then, it can be determined whether or not $\sigma'$ is the status sequence of a tree which has depth at most $3$ and degree of the center vertex at most $\delta$ in $2^{O(k^3\delta^3)}n^{O(\delta)}$ time.
\end{theo}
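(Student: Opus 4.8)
The plan is to root $T$ at its center $r$ (so $T$ has at most four levels $0,1,2,3$ and $d:=\deg(r)\le\delta$; the handful of sub‑cases arising from the depth being strictly less than $3$, or the center not being unique, are dealt with by an extra consistency check and are harmless). The first step is to write the status of every vertex of such a tree in closed form, in terms of four global parameters — $n$, $d$, the number $E$ of level‑$2$ vertices, and the number $L$ of level‑$3$ vertices — and, for a vertex lying in the rooted subtree $T_i$ at the $i$‑th child $c_i$ of $r$, two local parameters: the size $t_i=|V(T_i)|$ and the number $\ell$ of level‑$3$ leaves hanging at the relevant level‑$2$ vertex. A direct computation gives $s(r)=d+2E+3L$, $\ s(c_i)=2d+1+3E+4L-2t_i$, $\ s(g)=3d+4E+5L-2t_i-2\ell$ for a level‑$2$ vertex $g\in T_i$ sitting above $\ell$ leaves, and $s(\mathrm{leaf})=4d+5E+6L-2t_i-2\ell-1$ for a level‑$3$ leaf; these same formulas cover the degenerate cases $\ell=0$, $t_i=1$, or shallow trees. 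Since $n=1+d+E+L$, fixing $d$ and $\rho:=s(r)$ determines $E$ and $L$; then $s(c_i)$ determines $t_i$, and $t_i$ together with $s(g)$ determines $\ell$. Hence all the combinatorial freedom collapses onto (i) the multiset $\{t_1,\dots,t_d\}$ of subtree sizes and (ii) for each subtree, the multiset of leaf‑counts of its level‑$2$ vertices — equivalently, viewing $T_i$ as a partition of $t_i-1$ into parts of the form $\ell+1$.

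Next I would enumerate. There are at most $\delta$ choices of $d$ and at most $k$ choices of $\rho\in\sigma'$, each fixing $E,L$ and forcing $n=|\sigma'|$. For the subtree sizes, each $c_i$ has its status in $\sigma'$, so each $t_i$ is one of at most $k$ values and $d\le\delta$, giving at most $k^{\delta}\le n^{O(\delta)}$ candidate multisets $\{t_1,\dots,t_d\}$, all of which I would try. Once $d,\rho$ and the subtree sizes are fixed, the only remaining unknowns are, for each size value $\tau$ occurring among the $t_i$ and each \emph{admissible} leaf‑count $\ell$ for $\tau$ (one for which $s(g)$, and when $\ell>0$ also $s(\mathrm{leaf})$, lies in $\sigma'$ — at most $k$ values of $\ell$ per $\tau$), how many level‑$2$ vertices carry the profile $(\tau,\ell)$ and how these are distributed among the subtrees of size $\tau$. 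I would express these counts as the variables of an integer program with: a ``status‑matching'' equation for each of the $\le k$ distinct values of $\sigma'$ (linear in the profile counts via the closed forms); a ``subtree‑size'' equation for each $\tau$ (the $(\ell+1)$‑weighted sum of the profile‑$(\tau,\ell)$ counts must equal $\sum_{i:\,t_i=\tau}(t_i-1)$); and — crucially — the requirement that, for each $\tau$, the profile counts decompose into the prescribed number of subtrees of size $\tau$, each a partition of $\tau-1$ into admissible parts. The dimension of this program is polynomial in $k$ and $\delta$ (one variable per admissible $(\tau,\ell)$ pair plus one per $(\text{subtree},\text{admissible }\ell)$ pair), so Lenstra's algorithm decides feasibility in $2^{O(k^3\delta^3)}$ time on an instance of this size; a feasible solution is then assembled into an explicit tree whose status sequence is checked directly, and the overall running time is $2^{O(k^3\delta^3)}n^{O(\delta)}$.

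The step I expect to be the main obstacle is folding the ``decomposition'' requirement into a fixed‑dimension integer program without an explosion in the number of variables: the naive encoding has one variable per possible depth‑$2$ subtree shape of every admissible size, and there can be $n^{\Theta(k)}$ of these. The point is to exploit that there are at most $\delta$ subtrees in total and that a subtree of size $\tau$ uses at most $k$ distinct part sizes, so a single subtree is described by an $O(k)$‑dimensional multiplicity vector and the decomposition becomes a transportation‑type system with $O(k\delta)$ variables per size value. Proving that this encoding is \emph{exactly} equivalent to status‑realizability — in particular that tracking only aggregate profile counts loses nothing, that the admissible leaf‑counts are finitely and correctly characterised, and that all degenerate configurations (isolated children, level‑$2$ leaves, trees of depth $<3$) are accounted for — together with the book‑keeping that produces the stated running‑time bound, is the technical heart of the argument.
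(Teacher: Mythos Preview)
Your approach is correct and follows the same overall architecture as the paper's proof --- enumerate the level-$1$ configuration, then decide the rest with a fixed-dimension integer linear program via Lenstra --- but the paper's formulation dissolves what you flag as the main obstacle. The paper enumerates directly the \emph{set} $I\subset\sigma'$ of statuses assigned to the children of the center $r$ (at most $\binom{n}{\delta}=n^{O(\delta)}$ choices, which already fixes each $t_i$ \emph{tagged to a specific child}), and then takes as its \emph{only} ILP variables $x_{ij}:=$ number of ``$(i,j)$-branches'' hanging under child $i\in I$, where a branch is a level-$2$ vertex of status $j$ together with its leaves. This gives at most $k\delta$ variables; the constraints are the $k$ multiplicity equations and one subtree-size equation per $i\in I$ coming from Lemma~\ref{lema2.2SL2011} on the edge $(r,i)$. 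Because every variable is already attached to a specific child, the ``decomposition requirement'' you worry about never arises: the partition of each subtree into branches is built into the variable indexing. Your aggregate $(\tau,\ell)$ profile counts and the transportation layer to redistribute them are an artifact of first aggregating over subtrees of equal size and then having to disaggregate; the paper simply never aggregates. So your plan works, but you can drop the whole ``main obstacle'' paragraph by choosing per-child variables from the start.
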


\begin{proof}
Let $r$ be a center vertex. Let us remind that the {\it center} of a tree is the middle vertex or middle two vertices in every longest path. Notice, the center vertex is not necessarily a median vertex. Let $deg(r)\leq \delta$. Since the center vertex $r$ has bounded degree $\delta$, we can enumerate over all possibilities for the set $I$ of vertices at distance $1$ from $r$. There are $n \choose \delta$ of such sets, implying $n^{O(\delta)}$ time for this enumeration. From now on, we assume the set $I$ is given.

In a $\sigma'$-realized tree $T$, consider two vertices, $i\in I$ and an adjacent to it (non-root) vertex having status $j$, see Figure~\ref{fig:ijconfigurations}. For simplicity of the notation we refer to the latter vertex as $j$, though it can be any vertex of that status and there could be many such vertices.
\begin{figure}[ht]
\centering
  \includegraphics[scale=0.4]{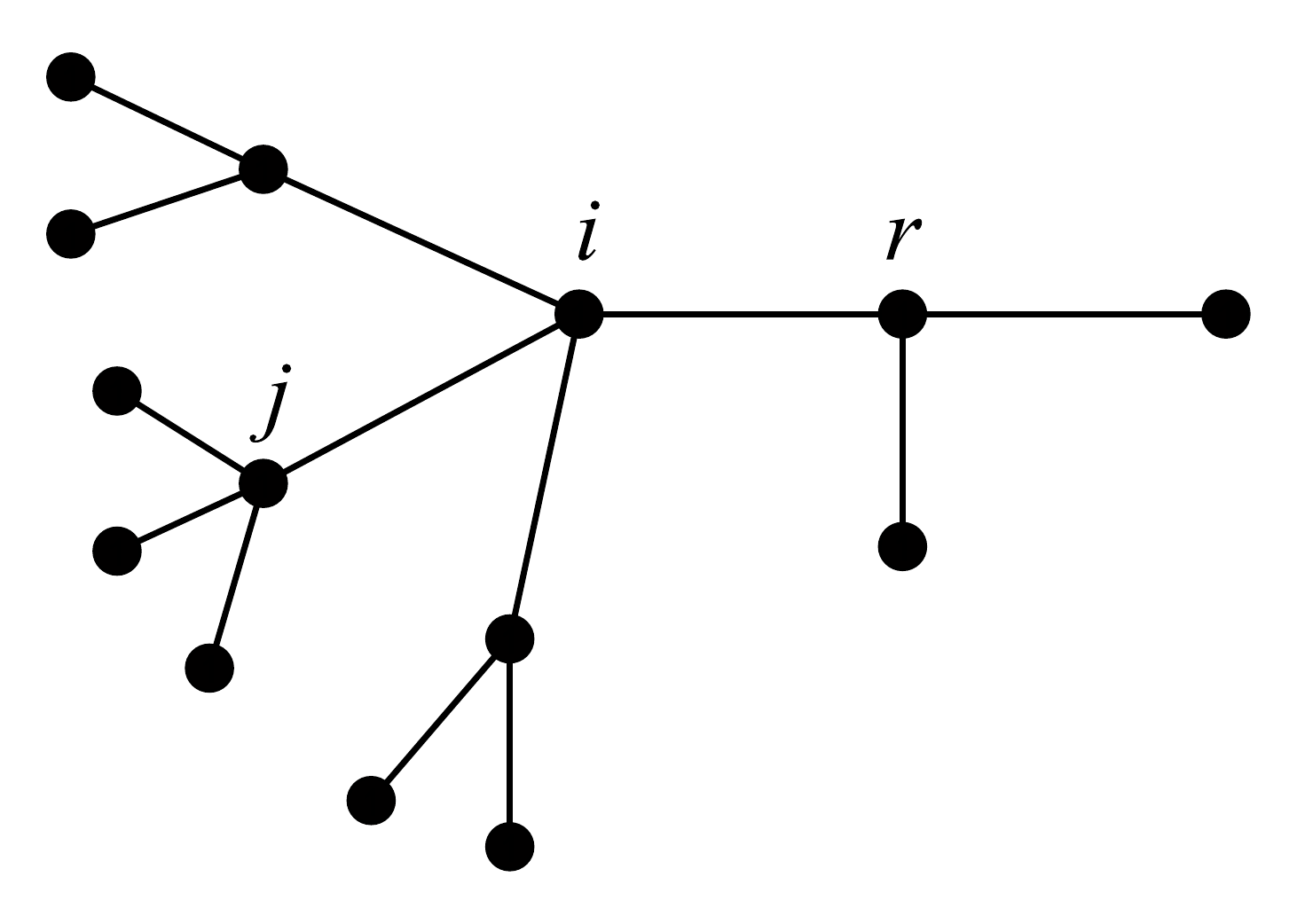}
    \caption{Example of a tree with three $(i,j)$-branches.}
  \label{fig:ijconfigurations}
\end{figure}
We refer to a subtree of $T$ induced by $i\in I$, a vertex of status $j$ and all leaves adjacent to that vertex as {\it $(i,j)$-branch}. Let $a_{\ell ij}$ be the number of status $\ell$ vertices in an $(i,j)$-branch. By Lemma~\ref{lema2.2SL2011}, given the statuses of $i$ and $j$, the parameters $a_{\ell ij}$ are uniquely defined for all $\ell=1,\ldots,k$. Now, let an integer variable $x_{ij}\geq 0$ be the number of $(i,j)$-branches in a $\sigma'$-realized tree $T$ with a given set $I$. The following integer linear program (ILP) solves the problem of finding a $\sigma'$-realized tree $T$ with a given set $I$:  
\begin{equation*}
\begin{array}{ll}
\displaystyle \sum_{i\in I}\sum_{j=1}^k a_{\ell ij}x_{ij}=n_{\ell}  & \forall\ell=1,\ldots,k; \\[5mm]
\displaystyle \sum_{\ell=1}^k\sum_{j=1}^k a_{\ell ij}x_{ij}=\frac{n+s(r)-s(i)-1}{2} & \forall i\in I; \\[5mm]
x_{ij}\in  \mathbb{Z}^{+} & \forall i\in I, j=1,\ldots,k,
\end{array}
\end{equation*}
where $n_\ell$ is the multiplicity of element $\ell$ in $\sigma'\setminus (\{r\}\cup I)$. Here, the first equation preserves the given multiplicities of integers and statuses in $\sigma'$ and $T$, respectively. The second equation guarantees that for all $i\in I$, the necessity condition of Lemma~\ref{lema2.2SL2011} for the edge $(r,i)$ is satisfied. Thus, if the ILP fails to find a feasible solution, the necessity conditions for realizability of $\sigma'$ in a tree with a given set $I$ also fail. On the other hand, given a solution $x_{ij},\ i\in I,\ j=1,\ldots, k$, to the ILP, the straightforward assignment of $x_{ij}$ number of $(i,j)$-branches to $i\in I$ provides a realization of $\sigma'$ in $T$. This implies the correctness of the ILP.        

The number of variables in the ILP is at most $k\delta$. Thus, by Lenstra~\cite{L1983}, the ILP can be solved in $2^{O(k^3\delta^3)}$ time.
\end{proof}

We stress that the assumptions of Theorem \ref{prop3.6} do not restrict the hardness result from Section \ref{section:complexity}. In Theorem \ref{thm:complexity}, the tree also has depth 3, but the number of distinct status values $k$ and the maximum degree $\delta$ of the center vertex are not bounded. This gives rise to the following two open questions. First, is there an intuitive combinatorial algorithm, not relying on the ``black-box'' ILP machinery, that solves {\sc SRT-D3} with $k$ and $\delta$ fixed? Second, if only one of the parameters $k$ or $\delta$ is fixed (not both), is there a polynomial time algorithm for solving {\sc SRT-D3}? Below, we partially address this second question by proving that for $k=2$ and for $k=3$ the problem {\sc Status realizability in trees} can be solved in polynomial time with no additional restrictions. Note that such trees are also highly symmetric.

\begin{defi}
A {\it double star} is a graph that can be obtained by appending $a\geq 1$ pendent vertices to one vertex of $K_2$ and $b\geq 1$ pendent vertices to the other vertex of $K_2$. A {\it balanced double star} is a graph that can be obtained by appending $a\geq 1$ pendent vertices to each vertex of $K_2$.
Let $\mathcal{T}$ be the family of trees which can be obtained by appending $b\geq 1$ pendent vertices to each leaf of a star or to each leaf of a balanced double star.
\end{defi}

Given a graph $G$, the vertices in a set $S\subset V(G)$ are called {\it similar} if for any $u,v\in S$, there exists an automorphism of $G$ that maps $u$ to $v$.

\begin{propo}
\label{prop_doublestar}
Let $T$ be a tree on $n$ vertices. Then $k(T)=2$ if and only if $T$ is a star or a balanced double star.
\end{propo}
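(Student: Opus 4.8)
The plan is to prove both directions. The easy direction is to check that stars and balanced double stars have exactly two distinct status values; this is a direct computation. For a star $K_{1,n-1}$, the center has status $n-1$ and each leaf has status $2(n-2)+1 = 2n-3$, so $k=2$ (assuming $n\geq 3$). For a balanced double star with $a$ pendants on each of the two centers, by symmetry the two centers are similar and the $2a$ leaves are similar, and one checks the center status differs from the leaf status, so again $k=2$.

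For the forward direction, suppose $T$ is a tree with $k(T)=2$. First I would invoke Lemma~\ref{boundk}: since $k(T)=2$, we get $\lceil(\mathrm{diam}(T)+1)/2\rceil \leq 2$, hence $\mathrm{diam}(T)\leq 3$. So $T$ has diameter $2$ or $3$ (diameter $\leq 1$ forces $n\leq 2$, which we exclude, or gives $k=1$). If $\mathrm{diam}(T)=2$, then $T$ is a star and we are done. The substantive case is $\mathrm{diam}(T)=3$: then $T$ has a central edge $\{u,v\}$ (the two median vertices, by the standard fact that the median of a tree is one vertex or an adjacent pair, combined with Lemma~\ref{EJS1976} which forces statuses to increase along any path leaving the median). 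Every vertex is either $u$, $v$, a leaf adjacent to $u$, or a leaf adjacent to $v$. Let $u$ have $a$ pendant leaves and $v$ have $b$ pendant leaves, with $a,b\geq 1$ since $\mathrm{diam}(T)=3$; then $n=a+b+2$.

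Now I would use Lemma~\ref{lema2.2SL2011} to pin down the status values. Deleting the edge $\{u,v\}$ splits $T$ into a component of size $a+1$ (containing $u$) and one of size $b+1$ (containing $v$), so $s(u)-s(v)=(b+1)-(a+1)=b-a$. A leaf $x$ adjacent to $u$: deleting $\{x,u\}$ gives components of sizes $1$ and $n-1$, so $s(x)-s(u)=n-2$; similarly a leaf $y$ adjacent to $v$ has $s(y)-s(v)=n-2$. Thus the four "levels" of status values are $s(v)$, $s(u)=s(v)+(b-a)$, $s(v)+(n-2)$, and $s(u)+(n-2)=s(v)+(b-a)+(n-2)$. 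Since $n-2=a+b\geq 2$, the two leaf statuses strictly exceed both center statuses, so we always have at least two distinct values. For $k(T)=2$ we need the two center statuses to coincide and the two leaf statuses to coincide: both conditions are exactly $b-a=0$, i.e. $a=b$. Hence $T$ is a balanced double star, completing the proof.

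The only mild subtlety — and the step I'd be most careful about — is justifying that in the diameter-$3$ case the two median vertices are precisely the endpoints $u,v$ of a central edge and that all other vertices are leaves hanging off $u$ or $v$; this follows because any vertex at distance $\geq 2$ from the nearer median would, by Lemma~\ref{EJS1976}, sit on a path realizing at least three distinct statuses, contradicting $k(T)=2$, and any non-leaf vertex other than $u,v$ would push the diameter above $3$. Everything else is bookkeeping with Lemma~\ref{lema2.2SL2011}. I should also handle the trivial boundary cases ($n\leq 2$) separately, noting $k(T)=2$ forces $n\geq 3$.
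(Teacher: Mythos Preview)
Your proof is correct and follows essentially the same route as the paper: both directions are handled the same way, with Lemma~\ref{boundk} used to force $\mathrm{diam}(T)\le 3$, the diameter-$2$ case giving a star, and the diameter-$3$ case reducing to a double star where one checks $a=b$ is forced. The only cosmetic difference is that the paper computes the three statuses $s(v_1),s(v_2),s(v_3)$ explicitly and observes they are pairwise distinct when $a\neq b$, whereas you use Lemma~\ref{lema2.2SL2011} to track status \emph{differences} across the four vertex types; both lead to the same conclusion with equal effort.
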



\begin{proof}
If $T$ is a star or a balanced double star, then all the leaves of $T$ are similar and therefore have the same status, and all non-leaf vertices are similar have the same status. Moreover, it can easily be verified the status values of a leaf and its non-leaf neighbor are different. Thus, $k(T)=2$.

If $T$ is a tree with $k(T)=2$, then by Lemma \ref{boundk}, $diam(T)\leq 3$. If $diam(T)=0$ or $diam(T)=1$, then $T$ is isomorphic to $K_1$ and $K_2$, respectively, and in both cases $k(T)=1$. If $diam(T)=2$, then $T$ is a star. If $diam(T)=3$, then $T$ is a double star, i.e., $T$ can be obtained by starting from a path $P_4$ with vertices $v_1,v_2,v_3,v_4$ (in path order) and appending $a\geq 0$ pendent vertices to $v_2$ and $b\geq 0$ pendent vertices to $v_3$. Then, $s(v_1)=1+2(a+1)+3(b+1)$, $s(v_2)=1(a+2)+2(b+1)$, and $1(b+2)+2(a+1)=s(v_3)$. If $a\neq b$, then none of these numbers are equal, so  $k(T)>2$. If $a=b$, then $T$ is a balanced double star.
\end{proof}

\begin{theo}
Let $T$ be a tree on $n$ vertices. Then $k(T)=3$ if and only if $T\in \mathcal{T}$.
\end{theo}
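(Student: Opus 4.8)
The plan is to mimic the structure of the proof of Proposition~\ref{prop_doublestar}: first establish that $k(T)=3$ forces $\mathrm{diam}(T)$ into a small range, then classify the trees in each case. By Lemma~\ref{boundk}, $k(T)=3$ implies $\lceil(\mathrm{diam}(T)+1)/2\rceil\le 3$, so $\mathrm{diam}(T)\le 5$. Since Proposition~\ref{prop_doublestar} already characterizes $k(T)\le 2$, we know $\mathrm{diam}(T)\ge 4$, leaving only $\mathrm{diam}(T)\in\{4,5\}$. For the easy direction, I would first check that every $T\in\mathcal{T}$ indeed has $k(T)=3$: in both sub-cases (pendants attached to leaves of a star, or to leaves of a balanced double star) the automorphism group acts with exactly three orbits on vertices --- the center vertex (or central $K_2$), the old leaves, and the new pendants --- and similar vertices have equal status by the same argument as in Proposition~\ref{prop_doublestar}; a short computation verifies the three status values are pairwise distinct (using that the ``branch sizes'' are all equal, which is what forces the count down to $3$ rather than more).

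For the hard direction I would argue as follows. Suppose $k(T)=3$, so $\mathrm{diam}(T)\in\{4,5\}$. By Lemma~\ref{EJS1976}, along any geodesic from a median vertex to a farthest leaf the statuses strictly increase, and by Lemma~\ref{boundk}'s proof this geodesic has exactly $3$ distinct statuses; hence the median-to-leaf distance (the radius, essentially) is exactly $2$ when the median is a single vertex and exactly $2$ when it is an edge --- matching $\mathrm{diam}=4$ and $\mathrm{diam}=5$ respectively. So $T$ has a ``depth-$2$'' structure: there is a center $c$ (a single vertex if $\mathrm{diam}=4$, an edge $\{c_1,c_2\}$ if $\mathrm{diam}=5$), its neighbors form a middle layer, and leaves hang off the middle layer. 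The three status values are: $s_0$ at the center, $s_1$ on the middle layer, $s_2$ on the leaves --- and crucially \emph{every} middle-layer vertex has status exactly $s_1$ and every leaf has status exactly $s_2$. Now I invoke Lemma~\ref{lema2.2SL2011}: for an edge between center and a middle vertex $u$, the status difference $s_0-s_1$ equals $|T_u|-(n-|T_u|)$ where $T_u$ is the component containing $u$; since $s_0-s_1$ is the same for every middle vertex $u$, all the subtrees $T_u$ hanging off the center have the same size, say $t$. Similarly, for an edge between a middle vertex $u$ and a leaf $w$, the difference $s_1-s_2$ is fixed, and Lemma~\ref{lema2.2SL2011} forces the number of leaves below $u$ (hence the number of non-leaf ``nieces'' --- actually just: the size of the rest of the tree) to be determined; iterating this, all middle vertices carry the same number of pendant leaves, say $b$. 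Combined with the equal-subtree-size condition, in the $\mathrm{diam}=4$ case every middle vertex has exactly $b$ leaf-children and there is no further freedom, so $T$ is a star with $b$ pendants appended to each leaf; in the $\mathrm{diam}=5$ case each of $c_1,c_2$ has the same number of middle-vertex children (from the size condition applied to the central edge via Lemma~\ref{lema2.2SL2011}) and each middle vertex has $b$ pendants, so $T$ is a balanced double star with $b$ pendants appended to each leaf. In both cases $T\in\mathcal{T}$. I would also need to rule out degenerate configurations --- e.g., a middle-layer vertex with \emph{no} pendant leaf would itself be a leaf and collapse the depth, contradicting $\mathrm{diam}\ge 4$, forcing $b\ge 1$; and similarly the center must have at least one (resp.\ the double-star's central vertices at least one) middle child, giving $a\ge 1$ in the double-star case.

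The main obstacle I anticipate is the bookkeeping in the $\mathrm{diam}=5$ case: unlike $\mathrm{diam}=4$, the center is an edge, and one must be careful that Lemma~\ref{lema2.2SL2011} applied to the central edge $\{c_1,c_2\}$ together with the constraint that all middle vertices share status $s_1$ actually forces $c_1$ and $c_2$ to have equally many middle children \emph{and} forces the total subtree sizes below $c_1$ and $c_2$ to be equal (these are the ``balanced'' conditions). Concretely one has to combine: (i) all middle vertices have status $s_1$; (ii) Lemma~\ref{lema2.2SL2011} on middle-to-leaf edges $\Rightarrow$ all middle vertices have the same pendant count $b$; (iii) Lemma~\ref{lema2.2SL2011} on center-to-middle edges $\Rightarrow$ all subtrees below middle vertices have equal size (automatic from (ii)); (iv) Lemma~\ref{lema2.2SL2011} on the central edge $\Rightarrow |T_{c_1}|=|T_{c_2}|$ (else $s(c_1)\ne s(c_2)$, but both are the median so this is consistent only if equal), which with (ii) yields equal numbers of middle children. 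Writing (iv) cleanly --- and confirming that the median genuinely consists of \emph{both} $c_1,c_2$ rather than just one of them when $\mathrm{diam}=5$, which is where the ``balanced'' hypothesis really bites --- is the delicate step; everything else is a direct transcription of the Proposition~\ref{prop_doublestar} argument one layer deeper.
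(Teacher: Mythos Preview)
Your overall strategy---use Lemma~\ref{lema2.2SL2011} on each edge to force all branches of a given depth to have identical size, rather than the paper's explicit status-value computations and case split---is a genuinely different and conceptually cleaner route. The paper writes out $s(r),s(v_1),s(v_2),s(\ell_1),s(\ell_2)$ in full, compares them pairwise, and then repeats with $s(\ell),s(w),s(v),s(r)$; you instead observe that a fixed status difference across an edge pins down the subtree size, which is more structural and scales better to the $\mathrm{diam}=5$ case (which the paper dismisses with ``by similar arguments'').

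There is, however, a real gap. You assert that ``every middle-layer vertex has status exactly $s_1$'' and then try to dispose of the possibility that the center has a leaf neighbor by saying such a vertex ``would itself be a leaf and collapse the depth, contradicting $\mathrm{diam}\ge 4$.'' That reasoning is wrong: a single leaf adjacent to the center does not reduce the diameter---the other branches can still realise depth~$2$. This is exactly the configuration the paper has to eliminate with its four-value computation $s(\ell)>s(w)>s(v)>s(r)$. In your framework the fix is short but must be stated: a leaf $u$ adjacent to the center has $s(u)-s_0=n-2$; if $s(u)=s_1$ then any non-leaf depth-$1$ vertex $v$ would also satisfy $s_1-s_0=n-2|T_v|$, forcing $|T_v|=1$, a contradiction; if $s(u)=s_2$ then comparing with a depth-$2$ leaf $w$ (for which $s_2-s_1=n-2$ as well) gives $s_1=s_0$, again a contradiction. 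Only after this can you legitimately say all neighbours of the center share status $s_1$, and your subtree-size argument then runs. A secondary point: you slide between ``median'' and ``center''; it happens that under $k(T)=3$ they coincide (single median forces $\mathrm{diam}\le 4$, two medians force the median edge to be the center edge when $\mathrm{diam}=5$), but you should say this explicitly since Lemma~\ref{EJS1976} is about the median while your depth layers are measured from the center.
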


%
%
%

\begin{proof}
If $T\in \mathcal{T}$, then all leaves of $T$ are similar, all vertices which are neighbors of leaves are similar, and all vertices which are neither leaves nor neighbors of leaves are similar. Moreover, it can be verified that the status values of these three similarity classes of vertices are different. Thus, $k(T)=3$.

Let $T$ be a tree with $k(T)=3$. By Lemma \ref{boundk}, $diam(T)\leq 5$. If $diam(T)\leq 3$, then $T$ is either $K_1$, or $K_2$, or a star, or a double star. In the first three cases, $T$ has at most two similarity classes. If $T$ is a balanced double star, then by Proposition \ref{prop_doublestar}, $k(T)=2$. If $T$ is a double star which is not balanced, let $v_1$ and $v_2$ be the non-leaf vertices of $T$, let $\ell_1$ be a leaf adjacent to $v_1$, and $\ell_2$ be a leaf adjacent to $v_2$. Then, since $v_1$ and $v_2$ have a different number of leaf neighbors, the status values of $v_1$, $v_2$, $\ell_1$, and $\ell_2$ are all distinct, contradicting the assumption that $k(T)=3$. Thus, $diam(T)=4$ or $diam(T)=5$.

Suppose first that $diam(T)=4$. Let $P$ be a path in $T$ of length 4. Let the vertices of $P$ be $u_1,u_2,r,u_4,u_5$ in path order. Then, because of the diameter restriction, $u_1$ and $u_5$ are leaves of $T$, all neighbors of $u_2$ and $u_4$ besides $u_1$ and $u_5$ (if any) are leaves, and all neighbors of $r$ besides $u_2$ and $u_4$ (if any) are either leaves or vertices whose only neighbors besides $r$ are leaves. Let $t\geq 2$ be the number of non-leaf neighbors of $r$ (including $u_2$ and $u_4$) and let $b\geq 0$ be the number of leaf neighbors of $r$. See Figure \ref{fig_k3} for an illustration.
\begin{center}
\begin{figure}[ht]
\centering
  \includegraphics[scale=0.3]{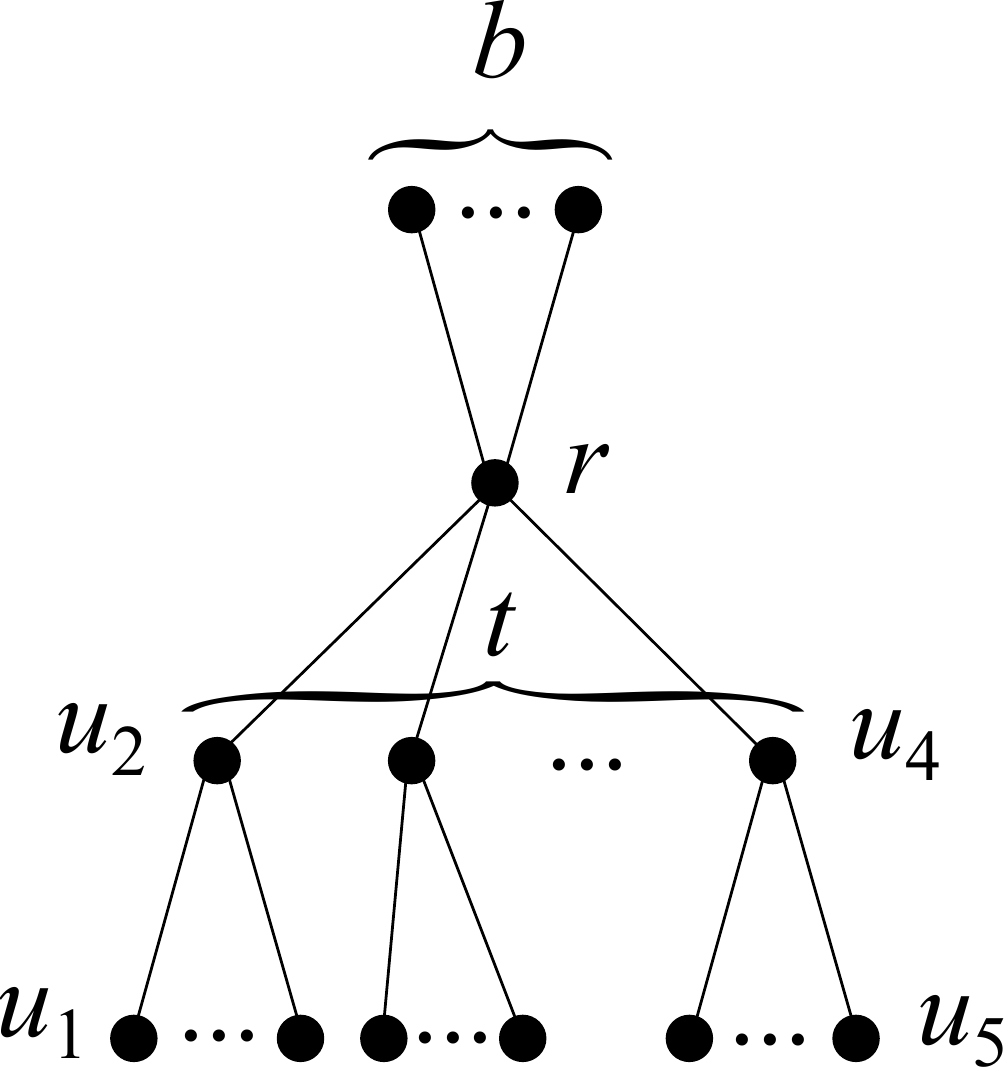}
    \caption{Structure of a tree with 3 distinct status values and diameter 4.}
  \label{fig_k3}
\end{figure}
\end{center}
Suppose first that there exist two non-leaf neighbors of $r$, say $v_1$ and $v_2$, which have different numbers of leaf neighbors. Let $a_1$ be the number of leaf neighbors of $v_1$ and $a_2$ be the number of leaf neighbors of $v_2$. Let $\ell_1$ be one of the leaf neighbors of $v_1$ and $\ell_2$ be one of the leaf neighbors of $v_2$. Let $A$ be the total number of leaves which are adjacent to vertices other than $v_1$, $v_2$, and $r$. Then,
\begin{align*}
s(r)&=1(t+b)+2(a_1+a_2+A)=t+b+2a_1+2a_2+2A\\
s(v_1)&=1(a_1+1)+2(t+b-1)+3(a_2+A)=2t+2b+a_1+3a_2+3A-1\\
s(v_2)&=1(a_2+1)+2(t+b-1)+3(a_1+A)=2t+2b+3a_1+a_2+3A-1\\
s(\ell_1)&=1+2(a_1)+3(t+b-1)+4(a_2+A)=3t+3b+2a_1+4a_2+4A-2\\
s(\ell_2)&=1+2(a_2)+3(t+b-1)+4(a_1+A)=3t+3b+4a_1+2a_2+4A-2.
\end{align*}

Since $a_1\neq a_2$, we have that $s(v_1)\neq s(v_2)$, $s(\ell_1)\neq s(\ell_2)$, $s(v_1)\neq s(\ell_1)$, $s(v_2)\neq s(\ell_2)$, $s(r)\neq s(\ell_1)$, and $s(r)\neq s(\ell_2)$.

Suppose $a_1=t+b+a_2+A-1$. Then, the status values of $r$, $v_2$, $\ell_1$, and $\ell_2$ are all distinct, a contradiction.

Now suppose $a_1\neq t+b+a_2+A-1$. If $a_2\neq t+b+3a_1+A-1$, then the status values of $v_1$, $\ell_1$, $\ell_2$, and $r$ are all distinct, a contradiction. Thus, $a_2= t+b+3a_1+A-1$. Then, the status values of $v_2$, $\ell_1$, $\ell_2$, and $r$ are all distinct, a contradiction.

Thus, all non-leaf neighbors of $r$ must have the same number of leaf neighbors, say $a$. Suppose $r$ has $b\geq 1$ leaf neighbors, and let $w$ be a leaf neighbor of $r$. Let $v$ be any non-leaf neighbor of $r$, and let $\ell$ be any leaf neighbor of $v$. Then,

\begin{align*}
s(\ell)&=1+2a+3(t+b-1)+4(at-a)=3t+3b-2+4at-2a\\
s(w)&=1+2(t+b-1)+3(at)=2t+2b-1+3at\\
s(v)&=1(a+1)+2(t+b-1)+3(at-a)=2t+2b-1+3at-2a\\
s(r)&=1(t+b)+2(at)=t+b+2at.
\end{align*}
Since $t\geq 2$ and $a\geq 1$, it follows that $s(\ell)>s(w)>s(v)>s(r)$. This contradicts the assumption that $k(T)=3$. If $r$ has no leaf neighbors, then $T\in \mathcal{T}$ and the vertices of $T$ have three similarity classes: $r$, the neighbors of $r$, and the leaves of $T$. It can be easily verified that the status values of vertices from these three classes are distinct. Thus, if $k(T)=3$ and $diam(T)=4$, it follows that $T\in \mathcal{T}$. By similar arguments, it can be shown that if $k(T)=3$ and $diam(T)=5$, then $T\in \mathcal{T}$.
\end{proof}

The general case of graphs with status sequences having a unique value, that is, $k(G)=1$, was studied in \cite{GRWC2016}. In that paper the authors obtain tight upper and lower bounds for the unique status value of the status sequence in terms of the number of vertices of the graph.

\section{Equitable, orbit and status partititons}
\label{section_status_partitions}

In this section we explore how the well-known concepts of equitable and orbit partitions relate to the status sequence of a graph.

Denote by $\mathcal{P}=\{V_1,\ldots,V_p\}$ any partition of the vertex set $V$ of a graph $G$. A {\it status partition} of a graph $G$, denoted by $\mathcal{P}_s$, is a partition of $V(G)$ in which any two vertices in the same set have the same status value. An {\it orbit partition} of $G$, denoted $\mathcal{P}_o$, is a partition in which two vertices are in the same set if there is an automorphism which maps $u$ to $v$, for some group of automorphisms of $G$. An {\it equitable} (or {\it regular}) partition of $G$, denoted by $\mathcal{P}_e$, is a partition of $V$ into nonempty parts $V_1,\ldots,V_p$ such that for each $i,j\in \{1,\ldots,p\}$, the number of neighbors in $V_j$ of a given vertex $u$ in $V_i$ is determined by $i$ and $j$, and is independent of the choice of $u$ in $V_i$. More precisely, let the distance matrix $D$ of $G$ be partitioned according to $\mathcal{P}=\{V_1,\ldots,V_p\}$, so that $D_{i,j}$ is a block of $D$ formed by rows in $V_{i}$ and columns in $V_{j}$. The {\it characteristic matrix} $S$ is the $n\times p$ matrix whose $j^{\text{th}}$ column is the characteristic vector of $V_j$, $1\leq j\leq p$. The {\it quotient matrix} of $D$ with respect to the partition $\mathcal{P}$ is the $p\times p$ matrix $B=(b_{i,j})$ whose entries are the average row sums of the blocks of $D$:
\begin{equation*}
b_{i,j}=\frac{1}{|V_{i}|}\1^{\t}D_{i,j}\1=\frac{1}{|V_{i}|}(S^{\t}DS)_{i,j},
\end{equation*}
where $\1$ is the all-one vector. The partition $\mathcal{P}$ is called {\it equitable} if each block $D_{i,j}$ of $D$ has constant row (and column) sum, that is, $SB=DS$. See \cite{G1993} for more details on orbit and equitable partitions.

Let $v$ be a vertex of a graph $G$, and $\epsilon(v)$ denote the greatest distance between $v$ and any other vertex of $G$. The {\it distance partition of $G$ with respect to $v$}, denoted $\mathcal{P}_d(v)$, is a partition of $V$ into parts $\mathcal{P}_d(v)=\{V_0,\ldots,V_{\epsilon(v)}\}$ such that for each $i\in \{1,\ldots,\epsilon(v)\}$, $V_i$ consists of the vertices that are at distance $i$ from $v$.

The following result shows that an orbit partition is just a refinement of a status partition.

\begin{propo}\label{propo:orbitstatus}
Let $G=(V,E)$ be a graph. An orbit partition of $G$ is a status partition of $G$.
\end{propo}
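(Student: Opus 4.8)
The plan is to show that any automorphism of $G$ preserves status values, so that the orbit partition (whose parts are orbits of vertices under the automorphism group) is a refinement of the status partition (whose parts group together all vertices of equal status). Concretely, suppose $u$ and $v$ lie in the same part of the orbit partition $\mathcal{P}_o$; then there is an automorphism $\phi$ of $G$ with $\phi(u)=v$. I claim $s(u)=s(v)$, which forces $u$ and $v$ into the same part of the status partition $\mathcal{P}_s$, and hence every part of $\mathcal{P}_o$ is contained in a part of $\mathcal{P}_s$.

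The key step is the distance-preservation property of automorphisms: if $\phi$ is an automorphism of $G$, then $d(\phi(x),\phi(y))=d(x,y)$ for all $x,y\in V$. This is standard — $\phi$ maps walks to walks of the same length and is bijective, so it preserves shortest-path lengths — but I would state it explicitly since it is the engine of the argument. Granting it, I would compute
\begin{equation*}
s(v)=s(\phi(u))=\sum_{w\in V}d(\phi(u),w)=\sum_{w\in V}d(\phi(u),\phi(w))=\sum_{w\in V}d(u,w)=s(u),
\end{equation*}
where the third equality uses that $\phi$ is a bijection of $V$ (so $w\mapsto\phi(w)$ just reindexes the sum) and the fourth uses distance-preservation. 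This establishes $s(u)=s(v)$ whenever $u$ and $v$ are in the same orbit.

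To finish, I would recall the definition of refinement: a partition $\mathcal{P}$ refines $\mathcal{P}'$ if every part of $\mathcal{P}$ is a subset of some part of $\mathcal{P}'$. The computation above shows that each orbit is contained in a single status class, which is exactly the statement that $\mathcal{P}_o$ refines $\mathcal{P}_s$, i.e., $\mathcal{P}_o$ is a status partition of $G$ (any partition finer than a status partition is itself a status partition, since within each part all vertices still share a status value).

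I do not anticipate a genuine obstacle here; the only thing to be careful about is not conflating "is a status partition" with "equals the status partition" — the proposition only asserts the former, and the proof should make clear that orbits can be strictly finer (non-isomorphic vertices can still have equal status). The whole argument is a one-line consequence of distance-invariance under automorphisms plus the reindexing of a finite sum.
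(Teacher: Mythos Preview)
Your proof is correct and follows essentially the same route as the paper: pick two vertices $u,v$ in the same orbit, use that an automorphism $\phi$ with $\phi(u)=v$ preserves distances, and reindex the sum defining the status via the bijection $\phi$ to conclude $s(u)=s(v)$. Your added remark distinguishing ``is a status partition'' from ``equals the status partition'' is a helpful clarification that the paper leaves implicit.
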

\begin{proof}
Let $\mathcal{P}_o$ be an orbit partition of $G$, and let $u$ and $v$ be two vertices in the same set of $\mathcal{P}_o$. Since there is an automorphism $f:V\longrightarrow V$ which maps $u$ to $v$, it follows that $\text{dist}(u,w)=\text{dist}(v,f(w))$ for all $w\in V$. Since $f$ is a bijection, $s(u)=\sum_{w\in V}dist(u,w)=\sum_{w\in V}dist(v,f(w))=s(v)$. Thus, $\mathcal{P}_o$ is also a status partition of $G$.
\end{proof}

The above elementary result can be applied to the highly asymmetric trees considered in Section \ref{section_realizability_trees}:

\begin{coro}\label{obs:treestatusinjectiveautomorphisms}
A graph with $n$ vertices having status injective sequence cannot have any nontrivial automorphisms.
\end{coro}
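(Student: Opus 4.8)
The plan is to derive this immediately from Proposition~\ref{propo:orbitstatus}, which says every orbit partition is a status partition. Suppose $G$ has $n$ vertices and a status injective sequence, meaning all $n$ status values $s(v)$ are distinct. Then the status partition $\mathcal{P}_s$ of $G$ must be the partition into $n$ singletons: if two distinct vertices $u\ne v$ were in the same block of a status partition they would satisfy $s(u)=s(v)$, contradicting injectivity. Since any status partition of $G$ is the discrete (all-singletons) partition, and by Proposition~\ref{propo:orbitstatus} every orbit partition refines—indeed equals or is coarser in the sense of being a status partition—it follows that every orbit partition of $G$ is also the discrete partition.

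Next I would translate ``every orbit partition is discrete'' into a statement about automorphisms. Recall that for any automorphism group $H\le \mathrm{Aut}(G)$, the orbits of $H$ on $V(G)$ form an orbit partition $\mathcal{P}_o$. Apply this with $H=\mathrm{Aut}(G)$ itself: the orbits of the full automorphism group form an orbit partition, which by the previous paragraph must be the partition into singletons. Hence every $\mathrm{Aut}(G)$-orbit is a single vertex, i.e. for every $v\in V$ and every $f\in\mathrm{Aut}(G)$ we have $f(v)=v$. Therefore $f$ is the identity, and $\mathrm{Aut}(G)$ is trivial.

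There is essentially no obstacle here; the only point requiring a moment of care is the logical direction of Proposition~\ref{propo:orbitstatus}. The proposition gives that an orbit partition \emph{is} a status partition (two vertices in the same orbit block have equal status), which is exactly what is needed: it forces orbit blocks to be singletons once statuses are all distinct. One should be mildly careful that ``status injective sequence'' is interpreted as all $n$ entries of $\sigma(G)$ being pairwise distinct (equivalently $k(G)=n$), so that indeed no status partition can merge two vertices. With that reading the argument is complete, and it applies in particular to the status injective trees produced by Algorithm~\ref{algorithm_trees}, confirming the earlier remark that such trees are highly asymmetric.
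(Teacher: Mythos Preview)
Your proof is correct and follows essentially the same approach as the paper: both invoke Proposition~\ref{propo:orbitstatus} to conclude that a nontrivial orbit would force two vertices to share a status value, contradicting injectivity. The paper phrases it as the contrapositive in two sentences, while you argue directly via the discrete partition, but the content is identical.
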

\begin{proof}
By Proposition \ref{propo:orbitstatus}, an orbit partition is a status partition, hence if the automorphism group is nontrivial then there is an orbit containing two or more vertices. In this case, the status sequence cannot have $n$ distinct values.
\end{proof}


\begin{obs}\label{obs:equitablepartitionstatuspartition}
Let $G=(V,E)$ be a graph. An equitable partition of $G$ is not a status partition of $G$.
\end{obs}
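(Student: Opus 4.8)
The plan is to prove the statement by exhibiting a single connected graph $G$ together with an equitable partition $\mathcal{P}$ of $G$ whose classes do not all have constant status; this shows that, in contrast with Proposition~\ref{propo:orbitstatus}, being equitable does not force a partition to be a status partition. The most economical source of such examples is the class of regular graphs: if $G$ is $k$-regular, then the trivial partition $\mathcal{P}=\{V(G)\}$ into one class is equitable, because every vertex has exactly $k$ neighbours inside that class; and this $\mathcal{P}$ is a status partition if and only if $G$ is transmission-regular. Hence it suffices to exhibit a connected regular graph that is \emph{not} transmission-regular.

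First I would note why the smallest cases do not help: every connected regular graph on at most seven vertices is either vertex-transitive or of diameter at most two, and in both cases it is transmission-regular (a $k$-regular graph of diameter two has the constant status $k+2(n-1-k)$). An eight-vertex $3$-regular graph does the job. Let $D_1$ and $D_2$ be two vertex-disjoint copies of $K_4$ with one edge deleted, say $D_1$ on $\{a,b,c,d\}$ with the edge $cd$ missing and $D_2$ on $\{e,f,g,h\}$ with the edge $gh$ missing, so that $c,d$ and $g,h$ are the degree-two vertices of $D_1$ and $D_2$; let $G$ be obtained from $D_1\cup D_2$ by adding the two edges $cg$ and $dh$. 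Then $G$ is connected and $3$-regular, so $\mathcal{P}=\{V(G)\}$ is an equitable partition of $G$.

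Next I would compute two status values: from $a$ the distances to $b,c,d$ are all $1$, to $g,h$ are all $2$, and to $e,f$ are all $3$, giving $s(a)=3\cdot 1+2\cdot 2+2\cdot 3=13$; from $c$ the distances to $a,b,g$ are all $1$, to $d,e,f$ are all $2$, and to $h$ it is $3$, giving $s(c)=3\cdot 1+3\cdot 2+1\cdot 3=12$. Since $a$ and $c$ lie in the same (and only) class of $\mathcal{P}$ while $s(a)\neq s(c)$, the equitable partition $\mathcal{P}$ is not a status partition, which is exactly what we want.

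The only care needed is bookkeeping: one should confirm that the small cases really are excluded, so that an eight-vertex example is genuinely required, and that the two chosen vertices of $G$ do have different status; there is no substantive obstacle. For contrast, it is worth observing that the orbit partition of this same $G$, namely $\{\{a,b,e,f\},\{c,d,g,h\}\}$, \emph{is} equitable and a status partition (in agreement with Proposition~\ref{propo:orbitstatus}); thus an equitable partition need not even refine the orbit partition, and it is precisely when it fails to do so that it can fail to be a status partition.
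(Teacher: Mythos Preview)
Your argument is correct. The trivial one-class partition of a $k$-regular graph is equitable (each vertex has exactly $k$ neighbours in the unique class), and it is a status partition precisely when the graph is transmission-regular; your $8$-vertex cubic graph is not transmission-regular, as your computations $s(a)=13\neq 12=s(c)$ confirm. The aside about graphs on at most seven vertices is inessential to the proof but is also accurate.

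The paper takes a different route. Rather than invoking the trivial partition of a regular graph, it constructs in Example~\ref{ex:Ada} an infinite family $G_m$ ($m\ge 3$) on $2m^2+2$ vertices, together with a four-class equitable partition $\{\{a\},A,B,\{b\}\}$, and then checks that two vertices $b_{0,0}$ and $b_{0,1}$ in the class $B$ have different status. The paper's construction thus produces an explicit infinite family and has the additional feature that each $G_m$ has trivial automorphism group, so the exhibited equitable partition is genuinely coarser than the orbit partition. Your approach is considerably more economical---an $8$-vertex example versus a $20$-vertex one for $m=3$---and makes transparent the underlying reason (regularity forces equitability of the trivial partition but not transmission-regularity); on the other hand, the paper's family carries extra structural information (non-trivial equitable partition on an asymmetric graph) that your minimal example does not. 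Both approaches, of course, immediately yield infinitely many examples.
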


The following example provides a construction of an infinite family of graphs which illustrates Observation \ref{obs:equitablepartitionstatuspartition}.

\begin{example}\label{ex:Ada}
We will construct an infinite family of graphs whose equitable partitions are not status partitions. For $m\geq 3$, define a graph $G_m$ on vertex set $\{a,b\} \cup A \cup B$ where
\begin{equation*}
A = \{a_{i,j} \ :\ i,j \in \Z_m\} \quad \text{and} \quad B = \{b_{i,j} \ :\ i,j \in \Z_m\}.
\end{equation*}
The vertex $a$ is adjacent to every vertex in $A$ and $b$ is adjacent to every vertex in $B$.
The subgraph of $G_m$ induced by $A$ is a cycle of length $m^2$ with
\begin{equation*}
a_{i,j} \sim
\begin{cases}
a_{i,j+1} & \text{if $j \not = m-1,$ }\\
a_{i+1, 0} & \text{if $j = m-1$}.
\end{cases}
\end{equation*}
The subgraph of $G_m$ induced by $B$ consists of $m$ disjoint cycles of length $m$ with
$b_{i,j} \sim b_{i,j+1}$, for $i, j \in \Z_m$.

We define a matching between the vertices in $A$ and those in $B$ as follows:
\begin{eqnarray*}
&a_{i,j} \sim b_{i,j}  & \quad \text{if $i\neq j$ or $i=m-1$,}\\
&a_{i,i} \sim b_{(i+1) \mod (m-1), (i+1)\mod (m-1)}  & \quad  \text{if $i \neq m-1$.}
\end{eqnarray*}
See Figure \ref{fig_construction} for an illustration.

\begin{figure}[ht]
\begin{tikzpicture}[scale=2]
\draw (-1,0.5) node[anchor=west]{};

\path (0,0.5) coordinate (a);
\path (2,2.25) coordinate (a00);
\path (2.5, 1.5) coordinate (a01);
\path (2.5, 0.5) coordinate (a02);
\path (2.5, -0.5) coordinate (a10);
\path (2.5, -1.5) coordinate (a11);
\path (1.5, -1.5) coordinate (a12);
\path (1.5, -0.5) coordinate (a20);
\path (1.5, 0.5) coordinate (a21);
\path (1.5, 1.5) coordinate (a22);

\path (4, 2.25) coordinate (b00);
\path (4.5, 1.5) coordinate (b01);
\path (3.5, 1.5) coordinate (b02);
\path (4, 0.75) coordinate (b10);
\path (4.5, 0) coordinate (b11);
\path (3.5, 0) coordinate (b12);
\path (4, -0.75 ) coordinate (b20);
\path (4.5, -1.5) coordinate (b21);
\path (3.5, -1.5) coordinate (b22);

\path (6, 0.5) coordinate (b);

\fill (a) circle (1.5pt);
\fill (a00) circle (1.5pt);
\fill (a01) circle (1.5pt);
\fill (a02) circle (1.5pt);
\fill (a10) circle (1.5pt);
\fill (a11) circle (1.5pt);
\fill (a12) circle (1.5pt);
\fill (a20) circle (1.5pt);
\fill (a21) circle (1.5pt);
\fill (a22) circle (1.5pt);

\fill (b) circle (1.5pt);
\fill (b00) circle (1.5pt);
\fill (b01) circle (1.5pt);
\fill (b02) circle (1.5pt);
\fill (b10) circle (1.5pt);
\fill (b11) circle (1.5pt);
\fill (b12) circle (1.5pt);
\fill (b20) circle (1.5pt);
\fill (b21) circle (1.5pt);
\fill (b22) circle (1.5pt);

\draw (a) -- (a00);
\draw (a) -- (a01);
\draw (a) -- (a02);
\draw (a) -- (a10);
\draw (a) -- (a11);
\draw (a) -- (a12);
\draw (a) -- (a20);
\draw (a) -- (a21);
\draw (a) -- (a22);

\draw (b) -- (b00);
\draw (b) -- (b01);
\draw (b) -- (b02);
\draw (b) -- (b10);
\draw (b) -- (b11);
\draw (b) -- (b12);
\draw (b) -- (b20);
\draw (b) -- (b21);
\draw (b) -- (b22);

\draw (a00) -- (a01);
\draw (a01) -- (a02);
\draw (a02) -- (a10);
\draw (a10) -- (a11);
\draw (a11) -- (a12);
\draw (a12) -- (a20);
\draw (a20) -- (a21);
\draw (a21) -- (a22);
\draw (a22) -- (a00);

\draw (b00) -- (b01) -- (b02) -- (b00);
\draw (b10) -- (b11) -- (b12) -- (b10);
\draw (b20) -- (b21) -- (b22) -- (b20);

\draw (a00)--(b11);
\draw (a01) to [out=45, in=135]  (b01);
\draw (a02)--(b02);
\draw (a10)--(b10);
\draw (a11)--(b00);
\draw (a12)--(b12);
\draw (a20)--(b20);
\draw (a21)--(b21);
\draw (a22)--(b22);

\draw (a) node[anchor=east]{\small $a$};
\draw (b) node[anchor=west]{\small $b$};

\draw (a00) node[anchor=south]{\small $a_{0,0}$};
\draw (a01) node[anchor=south]{\small $a_{0,1}$};
\draw (a02) node[anchor=south]{\small $a_{0,2}$};
\draw (a10) node[anchor=south]{\small $a_{1,0}$};
\draw (a11) node[anchor=north]{\small $a_{1,1}$};
\draw (a12) node[anchor=north]{\small $a_{1,2}$};
\draw (a20) node[anchor=south]{\small $a_{2,0}$};
\draw (a21) node[anchor=south]{\small $a_{2,1}$};
\draw (a22) node[anchor=south]{\small $a_{2,2}$};

\draw (b00) node[anchor=south]{\small $b_{0,0}$};
\draw (b01) node[anchor=north]{\small $b_{0,1}$};
\draw (b02) node[anchor=north]{\small $b_{0,2}$};
\draw (b10) node[anchor=south]{\small $b_{1,0}$};
\draw (b11) node[anchor=north]{\small $b_{1,1}$};
\draw (b12) node[anchor=north]{\small $b_{1,2}$};
\draw (b20) node[anchor=south]{\small $b_{2,0}$};
\draw (b21) node[anchor=north]{\small $b_{2,1}$};
\draw (b22) node[anchor=north]{\small $b_{2,2}$};
\end{tikzpicture}
\caption{The graph $G_3$, whose equitable partition is not a status partition.}
\label{fig_construction}
\end{figure}
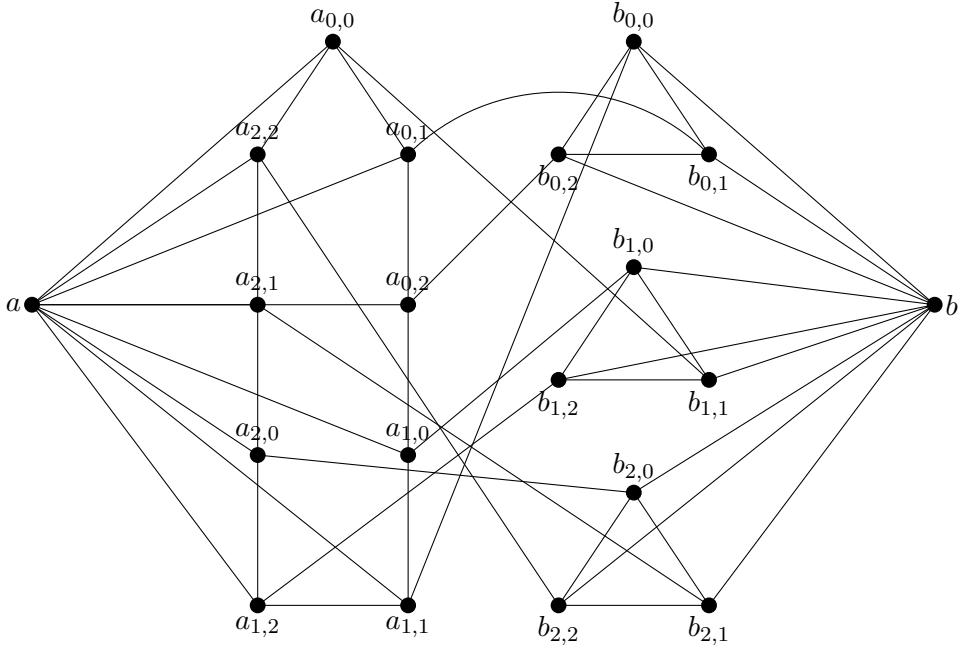

Observe that $\big \{ \{a\}, A, B, \{b\} \big \}$ is an equitable partition of $G_m$.
The vertex $b_{0,0}$ is at distance $1$ from one vertex in $A$, distance $2$ from four vertices in $A$ and distance $3$ from four vertices in $A$.
The vertex $b_{0,1}$ is at distance $1$ from one vertex in $A$, distance $2$ from three vertices in $A$ and distance $3$ from five vertices in $A$. Hence $b_{0,0}$ and $b_{0,1}$ have distinct status values, so $\big \{ \{a\}, A, B, \{b\} \big \}$ is not a status partition of $G_m$.
Note that it can be easily shown that $G_m$ has no non-trivial automorphism.
\end{example}

On the other hand, in the next result we find a necessary condition for a status partition to be an equitable partition.


\begin{propo}\label{suficientconditionstatuspartitionisequitable}
Let $G=(V,E)$ be a graph. A status partition is an equitable partition if and only if for any two vertices $u,v \in V$ with $s(u)=s(v)$, it holds that the distance partitions of $G$ with respect to $u$ and $v$, denoted by $\mathcal{P}_d(u)$ and $\mathcal{P}_d(v)$, have the same quotient matrix.
\end{propo}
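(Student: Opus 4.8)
The plan is to prove the equivalence directly from the definitions, exploiting the fact (Proposition~\ref{propo:orbitstatus} and the surrounding setup) that a status partition $\mathcal{P}_s$ groups together exactly the vertices of equal status, so that the distance matrix $D$ restricted to a class $V_i$ has constant \emph{total} row sum $s(u)$ for every $u\in V_i$ by definition. The subtlety is that ``equitable'' requires more: each \emph{block} $D_{i,j}$ must have constant row sum, not merely the full row. I would therefore reformulate the problem in terms of how $D$ interacts with the finer distance partitions $\mathcal{P}_d(u)$, $u\in V$, and show that the block-row-sum condition for $\mathcal{P}_s$ is equivalent to requiring that, within each status class, all vertices ``see'' the whole vertex set in the same distance-layered way relative to $\mathcal{P}_s$.

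First I would fix a status class $V_i$ and two vertices $u,v\in V_i$. For a target class $V_j$ in $\mathcal{P}_s$, the $u$-row sum of the block $D_{i,j}$ equals $\sum_{w\in V_j} d(u,w) = \sum_{t\ge 0} t\cdot |\{w\in V_j : d(u,w)=t\}| = \sum_{t\ge 0} t\cdot |V_j\cap V_t^{(u)}|$, where $V_t^{(u)}$ is the $t$-th part of $\mathcal{P}_d(u)$. So the block $D_{i,j}$ has constant row sum iff, for all $u,v\in V_i$, the quantities $\sum_t t\,|V_j\cap V_t^{(u)}|$ and $\sum_t t\,|V_j\cap V_t^{(v)}|$ agree. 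I would then observe that the quotient matrix of the distance partition $\mathcal{P}_d(u)$ records exactly the numbers $|V_t^{(u)}|$ and the ``distance-$t$ fan-out'' data; but what is actually needed is how the distance layers of $u$ intersect the status classes $V_j$. The key bridge is: two vertices $u,v$ of equal status have the \emph{same quotient matrix for their distance partitions} if and only if $|V_t^{(u)}| = |V_t^{(v)}|$ for all $t$ together with matching intersection pattern with the status classes — here I would want to argue that, because all of $u$, $v$ lie in one status class and Lemma~\ref{EJS1976}-type layering forces the status classes to be unions of consecutive "distance shells" in a controlled way, equality of the distance-partition quotient matrices is \emph{equivalent} to equality of the vectors $\big(|V_j\cap V_t^{(u)}|\big)_{j,t}$ and $\big(|V_j\cap V_t^{(v)}|\big)_{j,t}$.

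Granting that bridge, the forward direction is immediate: if $\mathcal{P}_s$ is equitable, then for every pair $j$ and every $u,v\in V_i$ the block row sums agree, and (working class by class through all $j$, using a Vandermonde-type argument on the weights $t$) one recovers $|V_j\cap V_t^{(u)}| = |V_j\cap V_t^{(v)}|$ for every $j,t$, hence the quotient matrices of $\mathcal{P}_d(u)$ and $\mathcal{P}_d(v)$ coincide. Conversely, if those quotient matrices coincide for every equal-status pair, then the intersection numbers $|V_j\cap V_t^{(u)}|$ are independent of the choice of $u$ in $V_i$, so each block row sum $\sum_t t\,|V_j\cap V_t^{(u)}|$ is constant on $V_i$, i.e.\ $\mathcal{P}_s$ is equitable. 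The analogous statement for column sums follows by symmetry of $D$, or is implied since for a symmetric matrix constant block row sums for a partition already force the partition to be equitable.

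I expect the main obstacle to be making precise and correct the ``bridge'' step: showing that equality of the distance-partition quotient matrices of two equal-status vertices is genuinely equivalent to (not merely implied by, nor merely implying) the full array of status-class-by-distance-layer intersection numbers being equal. One must be careful that the distance-partition quotient matrix a priori only tracks $|V_t^{(u)}|$ and the internal adjacency structure of the layers, not their intersections with $\mathcal{P}_s$; the resolution should use that equal status plus the tree/graph layering makes the status classes \emph{definable} from the distance layers, so that no extra information is lost. If that equivalence turns out to require an extra hypothesis, the honest fallback is to prove the two implications with the distance-layer intersection numbers as the true intermediate object and then note this coincides with the quotient-matrix condition under the standing assumptions; the Vandermonde/weighted-sum inversion (recovering the individual $|V_j\cap V_t^{(u)}|$ from the weighted sums over all choices of target class) is routine and I would not belabor it.
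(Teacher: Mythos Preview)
Your instinct to compute the block row sums of $D$ via the intersection numbers $|V_j\cap V_t^{(u)}|$ is the right starting point, and you are right that the whole argument hinges on your ``bridge'': whether equality of the quotient matrices of $\mathcal{P}_d(u)$ and $\mathcal{P}_d(v)$ is equivalent to equality of the full arrays $\big(|V_j\cap V_t^{(u)}|\big)_{j,t}$. But you do not establish this bridge, and the two repairs you sketch both fail.

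First, the Vandermonde step in the forward direction does not work. For a fixed target class $V_j$, equitability of $\mathcal{P}_s$ gives you exactly \emph{one} scalar constraint, namely $\sum_t t\,|V_j\cap V_t^{(u)}|=\sum_t t\,|V_j\cap V_t^{(v)}|$. There is no family of weights to vary: the weight on layer $t$ is always $t$, so you get a single linear functional, not a Vandermonde system. One equation in the unknowns $|V_j\cap V_t^{(u)}|-|V_j\cap V_t^{(v)}|$ cannot force them all to vanish, and nothing else in your setup supplies the missing equations. Second, your fallback --- that the status classes are ``definable'' from the distance layers so that the distance-partition quotient matrix secretly records the intersections with $\mathcal{P}_s$ --- is not justified. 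The quotient matrix of $\mathcal{P}_d(u)$ (whether taken with respect to $A$ or to $D$) is an object built from $\mathcal{P}_d(u)$ alone; it carries the layer sizes $|V_t^{(u)}|$ and the inter-layer averages, but it has no access to how those layers meet the status classes $V_j$. Invoking Lemma~\ref{EJS1976} does not help here: that lemma is about trees, and even there status classes need not be unions of distance shells around an arbitrary vertex $u$.

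By comparison, the paper's own proof is extremely brief: it essentially asserts that equitability of $\mathcal{P}_s$ gives ``the same number of neighbors in each $V_j$'' and that equality of the distance-partition quotient matrices gives ``the same number of vertices at each distance'', and then declares the two conditions to match. This glosses over precisely the gap you isolated --- it never explains why layer-size data should control the $\mathcal{P}_s$-block row sums of $D$, nor why equitability with respect to $D$ should pin down the individual layer intersections. So you have not found a wrong turn relative to the intended argument; rather, you have correctly identified that the intended argument is incomplete, and your more explicit formulation makes the missing step visible. If you want to salvage the result, the honest route is the one you hint at in your last paragraph: replace the quotient-matrix condition in the statement by the genuinely equivalent condition $|V_j\cap V_t^{(u)}|=|V_j\cap V_t^{(v)}|$ for all $j,t$, for which both implications go through cleanly.
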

\begin{proof}
Let $\mathcal{P}_s=\{V_1, V_2, \ldots,V_k\}$ denote the status partition of $G$, where $k$ is the number of distinct status values in the status sequence of $G$. 

Suppose that the status partition $\mathcal{P}_s$ is an equitable partition. Let $u$ and $v$ be any two vertices with the same status value and such that $u,v\in V_i$ for some $i\in \{1,\ldots,k\}$. Then, since we assume $\mathcal{P}_s$ is an equitable partition, it follows that $u$ and $v$ have the same number of neighbors in each $V_j$ for $j=1,\ldots,k$. Therefore the distance partitions of $G$ with respect to $u$ and $v$ have the same quotient matrix.

Conversely, suppose that for any two vertices $u,v$ with $s(u)=s(v)$, it holds that the partitions $\mathcal{P}_d(u)$ and $\mathcal{P}_d(v)$ have the same quotient matrix. This implies that vertices $u$ and $v$ have the same number of neighbors at each possible distance. Hence it follows that $\mathcal{P}_s$ is an equitable partition.
\end{proof}


Note that the conditions of Proposition \ref{suficientconditionstatuspartitionisequitable} can be verified efficiently. A simple approach to verify that two vertices with the same status value define a distance partition with the same quotient matrix is to build a breadth-first-search tree (in linear time) from each vertex, find the distance partition, and then compute the characteristic matrix and the quotient matrix as described at the beginning of the section.

Finally, we identify a large class of graphs for which a status partition is an equitable partition: distance mean-regular graphs. Distance mean-regular graphs were introduced in \cite{FD2017} as a generalization of both vertex-transitive and distance-regular graphs. Let $G_k(w)$ denote the number of vertices at distance $k$ from a vertex $w$ of a graph $G$. $G=(V,E)$ is a {\it distance mean-regular} graph if, for a given  $u\in V$, the averages of $|G_i(u)\cap G_j(v)|$, computed over all vertices $v$ at a given distance $h\in \{0,1,\ldots,diam(G)\}$ from $u$, do not depend on $u$.  Note that distance-mean regular graphs are {\it super regular graphs}, that is, the number of vertices at a fixed distance is the same for any vertex $v_i\in V$. Note that for a super regular graph, it holds that $s(v_i)$ is constant for any $v_i\in V$.

\begin{coro}
If $G$ is a distance-mean regular graph, then every status partition is an equitable partition.
\end{coro}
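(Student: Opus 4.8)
The plan is to exploit the remark immediately preceding the statement: a distance-mean regular graph is super regular, and a super regular graph has the same status value at every vertex. So the first step would be to spell out this chain. If $G$ is distance-mean regular then $G_k(u)=G_k(v)$ for all $u,v\in V$ and all $k\in\{0,1,\ldots,\mathrm{diam}(G)\}$; since $s(v)=\sum_{k\geq 1}k\,G_k(v)$, all vertices of $G$ share a common status value $s$, i.e.\ $k(G)=1$ and $G$ is transmission-regular. Hence the (canonical) status partition of $G$ is the one-part partition $\mathcal{P}_s=\{V\}$.

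The second step is to check that $\{V\}$ is an equitable partition, which is immediate. For the one-part partition the characteristic matrix is the all-ones vector $\1$ and the only block of the distance matrix $D$ is $D$ itself, whose $1\times 1$ quotient matrix is $(s)$; the equitability condition $SB=DS$ then reads $D\1=s\,\1$, and the $i$-th entry of $D\1$ is precisely $\sum_{u}d(v_i,u)=s(v_i)=s$, so it holds. (Equivalently, in the adjacency formulation: super regularity gives that $\deg(v)=G_1(v)$ is independent of $v$, so $G$ is regular and every vertex has the same number of neighbours inside the single class $V$.) Thus $\mathcal{P}_s=\{V\}$ is equitable, as claimed. If one prefers, the same conclusion follows from Proposition~\ref{suficientconditionstatuspartitionisequitable}, but the route above is shorter, since directly verifying that the distance partitions from all vertices share a quotient matrix would itself require an argument.

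I do not anticipate a real obstacle: the content is entirely in the observation $k(G)=1$, after which the statement degenerates to ``a transmission-regular graph has a distance matrix with constant row sums,'' which is the definition of transmission-regular. The only point to be careful about is the reading of ``status partition'': as in Proposition~\ref{suficientconditionstatuspartitionisequitable}, it should be taken to mean the coarsest such partition (whose parts are the level sets of $s$), so that the assertion is about this canonical partition $\{V\}$ rather than about arbitrary refinements of it.
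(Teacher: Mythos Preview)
Your argument is correct but follows a different route from the paper's. The paper's proof invokes an external fact (Proposition~2.2 in \cite{FD2017}) asserting that for a distance-mean regular graph the quotient matrix of the distance partition from every vertex is the same, and then feeds this into Proposition~\ref{suficientconditionstatuspartitionisequitable} to conclude. You instead use the remark the paper makes immediately before the corollary, namely that distance-mean regular implies super regular and hence transmission-regular, so the canonical status partition collapses to $\{V\}$; you then verify equitability of $\{V\}$ directly from $D\1=s\,\1$ (equivalently, from regularity of $G$). Your approach is more elementary and self-contained, avoiding both the external citation and the machinery of Proposition~\ref{suficientconditionstatuspartitionisequitable}; the paper's approach, by contrast, illustrates that proposition in action. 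Your closing caveat about reading ``status partition'' as the coarsest one is well taken and applies equally to the paper's own proof, since Proposition~\ref{suficientconditionstatuspartitionisequitable} is also stated for that canonical partition.
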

\begin{proof}
If $G$ is a distance-mean regular graph, then by Proposition 2.2 in \cite{FD2017}, the quotient
matrix $B$ with respect to the distance partition of every vertex is
the same. This implies that the condition of Proposition \ref{suficientconditionstatuspartitionisequitable} is satisfied and therefore it follows that every status partition in $G$ is an equitable partition.
\end{proof}

%

We conclude with two open problems. The first is related to status partitions.
\begin{problem}\label{prob_orbits_status}
For which graphs is the status partition an orbit partition of some automorphism group?
\end{problem}

Note that the property in Problem \ref{prob_orbits_status} holds for vertex-transitive graphs. The second open problem is related to status injective trees. 

\begin{problem}\label{prob_status_inj}
For which families of graphs $\mathcal{F}$ does it hold that status injective trees are status unique in $\mathcal{F}$?
\end{problem}

\section*{Acknowledgements}
  We would like to thank Ada Chan for helpful discussions and for finding Example \ref{ex:Ada}.

\section*{References}

\end{document}